\renewcommand{\citepunct}{;\penalty\citemidpenalty\ }
\newtheorem{theorem}{Theorem}[section]
\newtheorem{lemma}[theorem]{Lemma}
\newtheorem{proposition}[theorem]{Proposition}
\newenvironment{customthm}[1]
  {\innercustomthm}
  {\endinnercustomthm}
\theoremstyle{definition}
\newtheorem{setup}[theorem]{Setup}
\theoremstyle{remark}
\newtheorem{remark}[theorem]{Remark}
\newtheoremstyle{cited}{.5\baselineskip\@plus.2\baselineskip\@minus.2\baselineskip}{.5\baselineskip\@plus.2\baselineskip\@minus.2\baselineskip}{\itshape}{}{\bfseries}{\bfseries .}{5pt plus 1pt minus 1pt}{\thmname{#1}\thmnumber{ #2}\thmnote{ \normalfont#3}}
\theoremstyle{cited}
\newtheorem{citedthm}[theorem]{Theorem}
\newtheorem{citedques}[theorem]{Question}
\newtheorem{cancellationprob}[theorem]{Cancellation Problem}
\newtheoremstyle{citeddef}{.5\baselineskip\@plus.2\baselineskip\@minus.2\baselineskip}{.5\baselineskip\@plus.2\baselineskip\@minus.2\baselineskip}{}{}{\bfseries}{\bfseries .}{5pt plus 1pt minus 1pt}{\thmname{#1}\thmnumber{ #2}\thmnote{ \normalfont#3}}
\theoremstyle{citeddef}
\newtheorem{citeddef}[theorem]{Definition}
\newtheoremstyle{step}{.25\baselineskip\@plus.1\baselineskip\@minus.1\baselineskip}{.25\baselineskip\@plus.1\baselineskip\@minus.1\baselineskip}{\itshape}{}{\bfseries}{\bfseries .}{5pt plus 1pt minus 1pt}{\thmname{#1}\thmnumber{ #2}\thmnote{ \normalfont(#3)}}
\theoremstyle{step}
\newtheorem{step}{Step}[theorem]
\DeclareMathOperator{\Min}{Min}
\DeclareMathOperator{\Spec}{Spec}
\DeclareMathOperator{\height}{ht}
\newcommand{\Z}{\mathbb{Z}}
\newcommand{\Q}{\mathbb{Q}}
\newcommand{\fm}{\mathfrak{m}}
\newcommand{\fp}{\mathfrak{p}}
\newcommand{\fq}{\mathfrak{q}}
\newcommand{\red}{\mathrm{red}}
\providecommand\given{}
\newcommand\SetSymbol[1][]{\nonscript\:#1\vert\allowbreak\nonscript\:\mathopen{}}
\DeclarePairedDelimiterX\Set[1]\{\}{\renewcommand\given{\SetSymbol[\delimsize]}#1}
\begin{document}
\title{Variants of normality and steadfastness deform}
\thanks{This research was initiated as part of the
\href{https://sites.google.com/umich.edu/mreg2021}{2021 Michigan Research
Experience for Graduates}, which was (partially) supported by the
Rackham Graduate School through its
\href{https://rackham.umich.edu/faculty-and-staff/faculty-and-program-funding/rackham-faculty-allies-and-student-ally-diversity-grants/\#rfadg}{Faculty Allies for Diversity Program}.}

\author[A. Bauman]{Alexander Bauman}
\address{Department of Mathematics\\University of Michigan\\Ann Arbor, MI 48109-1043\\USA}
\email{\href{mailto:adbauman@umich.edu}{adbauman@umich.edu}}

\author[H. Ellers]{Havi Ellers}
\address{Department of Mathematics\\University of Michigan\\Ann Arbor, MI 48109-1043\\USA}
\email{\href{mailto:ellers@umich.edu}{ellers@umich.edu}}

\author[G. Hu]{Gary Hu}
\address{Department of Mathematics\\University of Michigan\\Ann Arbor, MI 48109-1043\\USA}
\email{\href{mailto:garyjhu@umich.edu}{garyjhu@umich.edu}}

\author[T. Murayama]{\\Takumi Murayama}
\address{Department of Mathematics\\Purdue University\\West Lafayette, IN
47907-2067\\USA}
\email{\href{mailto:murayama@purdue.edu}{murayama@purdue.edu}}
\thanks{Murayama was supported by the National Science
Foundation under Grant No.\ DMS-1902616}

\author[S. Nair]{Sandra Nair}
\address{Department of Mathematics\\University of Michigan\\Ann Arbor, MI 48109-1043\\USA}
\email{\href{mailto:sdnair@umich.edu}{sdnair@umich.edu}}

\author[Y. Wang]{Ying Wang}
\address{Department of Mathematics\\University of Michigan\\Ann Arbor, MI 48109-1043\\USA}
\email{\href{mailto:ywangx@umich.edu}{ywangx@umich.edu}}

\subjclass[2020]{Primary 13F45, 13B25; Secondary 14B07, 13B22, 13F25}

\keywords{\texorpdfstring{$p$}{p}-seminormality, steadfastness, deformation}

\makeatletter
\hypersetup{
  pdfauthor={Alexander Bauman, Havi Ellers, Gary Hu, Takumi Murayama, Sandra Nair, and Ying Wang},
  pdfsubject=\@subjclass,pdfkeywords=\@keywords
}
\makeatother

\begin{abstract}
  The cancellation problem asks whether $A[X_1,X_2,\ldots,X_n]
  \cong B[Y_1,Y_2,\ldots,Y_n]$ implies $A \cong B$.
  Hamann introduced the class of steadfast rings as the rings for which
  a version of the cancellation problem considered by Abhyankar,
  Eakin, and Heinzer holds.
  By work of Asanuma, Hamann, and Swan, steadfastness can be characterized in
  terms of $p$-seminormality, which is a variant of normality introduced by
  Swan.
  We prove that $p$-seminormality and steadfastness deform for reduced
  Noetherian local rings.
  We also prove that $p$-seminormality and steadfastness are stable under
  adjoining formal power series variables for reduced (not necessarily Noetherian) rings.
  Our methods also give new proofs of the facts that normality and weak normality
  deform, which are of independent interest.
\end{abstract}

\maketitle

\section{Introduction}
Let $A$ and $B$ be rings, which we will always assume to be commutative with
identity.
If $A$ is isomorphic to $B$, then the polynomial rings $A[X_1,X_2,\ldots,X_n]$
are isomorphic to $B[Y_1,Y_2,\ldots,Y_n]$ for all $n \ge 1$.
Coleman and Enochs \cite{CE71} and Abhyankar, Eakin, and Heinzer \cite{AEH72}
asked whether the converse holds.
\begin{cancellationprob}[{\citeleft\citen{CE71}\citemid p.\ 247\citepunct
\citen{AEH72}\citemid Question 7.3\citeright}]\label{ques:cancellation}
  Let $A$ and $B$ be rings, and let $X_1,X_2,\ldots,X_n$ and
  $Y_1,Y_2,\ldots,Y_n$ be indeterminates.
  If
  \[
    A[X_1,X_2,\ldots,X_n] \cong B[Y_1,Y_2,\ldots,Y_n]
  \]
  for some $n \ge 1$, do we have $A \cong B$?
\end{cancellationprob}
In \cite{Hoc72}, Hochster showed that the Cancellation Problem
\ref{ques:cancellation} has a negative answer.
On the other hand,
in \cite[(4.5)]{AEH72}, Abhyankar, Eakin, and Heinzer showed that
a variant of the Cancellation Problem
\ref{ques:cancellation} holds for unique factorization domains.
Motivated in part by this latter result, Hamann considered the following version
of the Cancellation Problem \ref{ques:cancellation} in \cite{Ham75}.
\begin{citedques}[{\citeleft\citen{Ham75}\citemid p.\ 1\citepunct
\citen{Hoc75}\citemid p.\ 63\citeright}]
  \label{ques:hamann}
  Let $A \subseteq B$ be a ring extension, and let $X,X_1,X_2,\ldots,X_n$ and
  $Y_1,Y_2,\ldots,Y_n$ be indeterminates.
  If
  \[
    A[X,X_1,X_2,\ldots,X_n] \cong B[Y_1,Y_2,\ldots,Y_n]
  \]
  as $A$-algebras for some $n \ge 1$, do we have $A[X] \cong B$ as $A$-algebras?
\end{citedques}
Following \cite[p.\ 2]{Ham75},
we say that a ring $A$ is \textsl{steadfast} if Question \ref{ques:hamann} holds
for every ring extension $A \subseteq B$.
The aforementioned result due to Abhyankar, Eakin, and Heinzer
\cite[(4.5)]{AEH72} says that unique factorizations
domains are steadfast.
See \cite{EH73}, \cite[\S9]{Hoc75}, and \cite{Gup15}
for surveys on these and related
questions.\medskip
\par Not all rings are steadfast \citeleft\citen{Asa74}\citemid Corollary
3.3\citepunct \citen{Ham75}\citemid Example on p.\ 14\citeright.
However, Hamann showed that all rings containing a copy of the rational numbers
$\Q$ are steadfast \cite[Theorem 2.8]{Ham75}.
Moreover, Hamann \cite[Theorem 2.6]{Ham75}, 
Asanuma \cite[Theorem 3.13]{Asa78}, and Swan \cite[Theorem 9.1]{Swa80}
(in the reduced case) and 
Asanuma \cite[Proposition 1.6]{Asa82} and Hamann \cite[Theorem 2.4]{Ham83} (in
general)
showed that $A$ is steadfast if and only if
$A_\red \coloneqq A/N(A)$ is \textsl{$p$-seminormal} for all primes $p > 0$.
This notion of $p$-seminormality was introduced by Swan \cite[Definition on p.\
219]{Swa80} (cf.\ Asanuma's notion of an
\textsl{$F$-ring} \cite[Definition 1.4]{Asa78}), and was suggested by
Hamann's examples in \cite{Ham75}.
$0$-seminormality is equivalent to Traverso's notion of seminormality \cite[p.\
586]{Tra70}, which in turn is a generalization of the classical notion of
normality.
See Figure \ref{fig:variantsnormal} and \cite[(4.4)]{Kol16}
for the relationship between different
variants of normality, and see \cite{Vit11} for a survey.\medskip
\par In \cite[Main Theorem]{Hei08}, Heitmann showed that $0$-seminormality
deforms.
Here, we say that a property $\mathcal{P}$ of Noetherian local rings \textsl{deforms} if
the following condition holds: If $(A,\fm)$ is a Noetherian local ring and $y
\in \fm$ is a nonzerodivisor such that $A/yA$ is $\mathcal{P}$, then $A$ is $\mathcal{P}$.
This condition is called \textsl{lifting from Cartier divisors} in
\cite[Conditions 3.1]{Mur}, and is related to inversion of adjunction-type
results in birational geometry.
Many properties of Noetherian local rings such as regularity, normality,
and reducedness deform; see \cite[Table 2]{Mur} for a list of some known
results.
\par On the other hand, it has been an open question whether
$p$-seminormality deforms for $p \ne 0$.
By Asanuma and Hamann's results mentioned above, a closely related question is
whether steadfastness deforms.
Affirmative answers to these questions would give many ways to construct new
examples of steadfast rings.\medskip
\par In this paper, we answer both questions in the affirmative for reduced
Noetherian local rings.
\begin{customthm}{\ref{thm:pseminormallifts}}
  Let $(A,\fm)$ be a Noetherian local ring, and let $y \in \fm$ be a
  nonzerodivisor.
  \begin{enumerate}[label=$(\roman*)$,ref=\roman*]
    \item
      Let $p$ be an integer.
      If $A/yA$ is $p$-seminormal, then $A$ is $p$-seminormal.
    \item
      If $A/yA$ is reduced and steadfast, then $A$ is reduced and steadfast.
  \end{enumerate}
\end{customthm}
Theorem \ref{thm:pseminormallifts}$(\ref{thm:pseminormalliftsfixedp})$
extends Heitmann's theorem \cite[Main
Theorem]{Hei08}, which is the case when $p = 0$.
For some specific deformations, we can remove the hypothesis that $A$ is
Noetherian and local as follows.
\begin{customthm}{\ref{thm:pseminormalpowerseries}}
  Let $A$ be a ring, and let $X_1,X_2,\ldots$ be indeterminates.
  \begin{enumerate}[label=$(\roman*)$,ref=\roman*]
    \item
      Let $p$ be an integer. 
      If $A$ is $p$-seminormal, then $A\llbracket X_1,X_2,\ldots,X_n
      \rrbracket$ is $p$-seminormal for all $n \ge 0$.
    \item
      Suppose that $A$ is either reduced or Noetherian.
      If $A$ is steadfast, then $A\llbracket X_1,X_2,\ldots,X_n
      \rrbracket$ is steadfast for all $n \ge 0$.
  \end{enumerate}
\end{customthm}
Theorem \ref{thm:pseminormalpowerseries}$(\ref{thm:steadfastpowerseries})$
shows that even though steadfastness is
defined in terms of adjoining polynomial variables, it is well-behaved under
adjoining formal power series variables.
In Theorem \ref{thm:hs86forsteadfast}, we adapt an example of Hamann and Swan
\cite[Thm.\ 2.1]{HS86} to show that Theorem
\ref{thm:pseminormalpowerseries}$(\ref{thm:steadfastpowerseries})$ does not hold
for rings that are neither Noetherian nor reduced.
Theorem
\ref{thm:pseminormalpowerseries}$(\ref{thm:pseminormalpowerseriesfixedp})$
extends \cite[Theorem]{BN83}, which is the case when $p = 0$.
Note that the corresponding results for adjoining polynomial variables follow
from \cite[Theorem 6.1]{Swa80}.\medskip
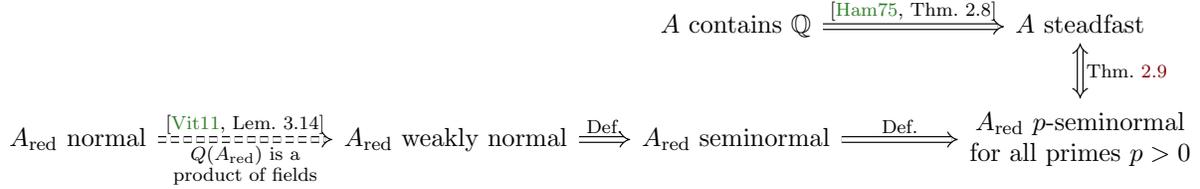
\begin{figure}[t]
    \begin{tikzcd}[scale cd=0.9,column sep=scriptsize]
      &[4.25em] & \text{$A$ contains $\Q$} \rar[Rightarrow]{\text{\cite[Thm.\ 
      2.8]{Ham75}}}
      &[2.25em] \text{$A$ steadfast} \dar[Leftrightarrow]{\text{Thm.\
      \ref{thm:steadfastiff}}}\\
      \begin{tabular}{@{}c@{}}
        $A_\red$ normal
      \end{tabular} \rar[dashed,Rightarrow]{\text{\cite[Lem.\
      3.14]{Vit11}}}[swap]{\substack{\text{$Q(A_\red)$ is a}\\
      \text{product of fields}}}
      & \text{$A_\red$ weakly normal} \rar[Rightarrow]{\text{Def.}}
      & \text{$A_\red$ seminormal}\rar[Rightarrow]{\text{Def.}}
      & \begin{tabular}{@{}c@{}}
        $A_\red$ $p$-seminormal\\
        for all primes $p > 0$
      \end{tabular}
    \end{tikzcd}
  \caption{Variants of normality and steadfastness.}
  \label{fig:variantsnormal}
\end{figure}
\par Finally, we use a
strategy similar to that for Theorem \ref{thm:pseminormallifts} to give new
proofs of the facts that normality and weak normality deform, which are of independent
interest.
Weak normality is a variant of normality in between normality and seminormality
introduced by Andreotti and Norguet for complex analytic spaces
\cite[Th\'eor\`eme 1, Remarque 1]{AN67}, and by Andreotti and Bombieri for
schemes \cite[Proposizione-Definizione 5]{AB69}.
\begin{customthm}{\ref{thm:nwnlift}}
  Let $(A,\fm)$ be a Noetherian local ring, and let $y \in \fm$ be a
  nonzerodivisor.
  If $A/yA$ is normal (resp.\ weakly normal), then $A$ is normal (resp.\ weakly
  normal).
\end{customthm}
The result that normality deforms is due to Seydi in this generality
\citeleft\citen{Sey72}\citemid Proposition I.7.4\citepunct \citen{Sey96}\citemid
Corollaire 2.8\citeright.
See \cite[Remark 1.4]{Chi82} and \cite[Lemma 0$(ii)$]{BR82} for alternative
proofs.
The result that weak normality deforms is due to Bingener and Flenner in the
excellent case \cite[Corollary 4.1]{BF93}, and to Murayama in general
\cite[Proposition 4.10]{Mur}.
The weakly normal case of Theorem \ref{thm:nwnlift} relies on \cite[Main
Theorem]{Hei08} (or Theorem \ref{thm:pseminormallifts}).
\subsection*{Outline}
This paper is organized as follows:
First, in \S\ref{section:background}, we review the definitions of and some
preliminary results on
steadfastness, (weakly) subintegral extensions, and the variants of normality
that appear in this paper.
In \S\ref{section:powerseries},
we show that $p$-seminormality and steadfastness are
preserved under adjoining formal power
series variables (Theorem \ref{thm:pseminormalpowerseries}).
The rest of the paper is devoted to showing Theorems \ref{thm:pseminormallifts}
and \ref{thm:nwnlift}.
In \S\ref{section:preliminary-steps}, we prove preliminary steps used in both of
these theorems.
We then prove that $p$-seminormality deforms (Theorem \ref{thm:pseminormallifts})
in \S\ref{section:pseminormalitylifts} by adapting the strategy in \cite{Hei08}.
Finally, \S\ref{section:nwnlift} is devoted to our new proofs of the facts that
normality and weak normality deform (Theorem \ref{thm:nwnlift}).

\subsection*{Conventions}
All rings are commutative with identity, and all ring homomorphisms are unital.
If $A$ is a ring, then $Q(A)$ denotes the total quotient ring of $A$,
$N(A)$ denotes the nilradical of $A$, and $A_\red$ denotes $A/N(A)$.

\subsection*{Acknowledgments}
This research was initiated as part of the
\href{https://sites.google.com/umich.edu/mreg2021}{2021 Michigan Research
Experience for Graduates}, which was (partially) supported by the
Rackham Graduate School through its
\href{https://rackham.umich.edu/faculty-and-staff/faculty-and-program-funding/rackham-faculty-allies-and-student-ally-diversity-grants/\#rfadg}{Faculty Allies for Diversity Program}.
We would like to thank the organizers and the Rackham Graduate School at the
University of Michigan for their support.
We are also grateful to
Rankeya Datta,
Neil Epstein,
J\'anos Koll\'ar,
Karl Schwede,
Kazuma Shimomoto,
Austyn Simpson,
and
Farrah Yhee
for helpful discussions.

\section{Preliminary background}\label{section:background}
In this section, we define and state preliminary results about steadfastness,
(weakly) subintegral extensions, and the variants of normality that appear in
this paper.
The only new result is a $p$-seminormal version of Hamann's criterion for
seminormality (Proposition \ref{prop:hamannforpseminormal}).
\subsection{Steadfastness}
We start by defining steadfastness.
\begin{citeddef}[{\cite[p.\ 2]{Ham75}}]
  Let $A$ be a ring, and let $X,X_1,X_2,\ldots$ and
  $Y_1,Y_2,\ldots$ be indeterminates.
  We say that $A$ is \textsl{steadfast} if, for every ring extension $A
  \subseteq B$ such that
  \[
    A[X,X_1,X_2,\ldots,X_{n}] \cong B[Y_1,Y_2,\ldots,Y_n]
  \]
  as $A$-algebras for some $n \ge 1$, we have $A[X] \cong B$ as $A$-algebras.
\end{citeddef}
\subsection{Subintegral and weakly subintegral extensions}
We now define seminormality and weak normality for extensions $A
\subseteq B$.
\begin{citeddef}[{\citeleft\citen{Swa80}\citemid p.\ 212\citepunct
  \citen{Yan85}\citemid p.\ 89 and p.\ 91\citeright}]
  Let $A \subseteq B$ be a ring extension.
  \begin{enumerate}[label=$(\roman*)$,ref=\roman*]
    \item We say that $A \subseteq B$ is a \textsl{subintegral extension}
      (resp.\ a \textsl{weakly subintegral extension}) if $B$
      is integral over $A$, $\Spec(B) \to \Spec(A)$ is a bijection, and
      $\kappa(\fq \cap A) \to \kappa(\fq)$ is an isomorphism (resp.\ a purely
      inseparable extension) for all $\fq \in \Spec(B)$.
    \item The \textsl{seminormalization} $\prescript{+}{B}{A}$ (resp.\ the
      \textsl{weak normalization} $\prescript{*}{B}{A}$) of $A$ in $B$
      is the largest
      subring of $B$ that is subintegral (resp.\ weakly subintegral) over $A$.
      Both $\prescript{+}{B}{A}$ and $\prescript{*}{B}{A}$ exist by
      \cite[Lemma 2.2]{Swa80} and \cite[Lemma 2]{Yan85}, respectively.
    \item We say that $A$ is \textsl{seminormal in $B$} (resp.\
      \textsl{weakly normal in $B$}) if $A = \prescript{+}{B}{A}$ (resp.\ $A =
      \prescript{*}{B}{A}$).
  \end{enumerate}
\end{citeddef}
\subsection{Variants of normality}
We now define $p$-seminormality, seminormality, and weak normality.
We start with intrinsic definitions of seminormality and weak normality,
following Swan and Yanagihara, respectively.
Swan included reducedness as part of the definition of
seminormality, but the condition in $(\ref{def:sn})$ below automatically implies
that $A$ is reduced by \cite[pp.\ 408--409]{Cos82}.
\begin{citeddef}[{\citeleft\citen{Swa80}\citemid Definition on p.\ 210\citepunct
  \citen{Yan85}\citemid p.\ 94\citeright}]\label{def:intrinsic}
  Let $A$ be a ring.
  \begin{enumerate}[label=$(\roman*)$,ref=\roman*]
    \item\label{def:sn} We say that $A$ is \textsl{seminormal} if
      for all $b,c \in A$ satisfying $b^3=c^2$, there exists $a
      \in A$ such that $a^2=b$ and $a^3=c$.
    \item\label{def:wn}
      We say that $A$ is \textsl{weakly normal} if $A$ is seminormal and if
      for all $b,c,e \in A$ and all nonzerodivisors $d \in A$ such that $c^p =
      bd^p$ and $pc = de$ for some prime $p > 0$, there exists $a \in A$ such
      that $b = a^p$ and $e = pa$.
  \end{enumerate}
\end{citeddef}
For rings such that $Q(A)$ is a product of fields, both notions can be
characterized in terms of the total quotient ring.
This condition on $Q(A)$ holds, for example, when $A$ is reduced and has only
finitely many minimal primes \cite[Corollary 3.6]{Swa80}, and in particular when
$A$ is reduced and Noetherian.
\begin{citedthm}[{\citeleft\citen{Swa80}\citemid p.\ 216\citepunct
  \citen{Yan85}\citemid Remark 1\citeright}]\label{thm:snwnwhenqaprod}
  Let $A$ be a ring such that $Q(A)$ is a product of fields.
  Then, $A$ is seminormal (resp.\ weakly normal) if and only if
  $A$ is seminormal (resp.\ weakly normal) in $Q(A)$.
\end{citedthm}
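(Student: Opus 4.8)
The plan is to derive both equivalences from a single mechanism: a ring $A$ with $Q(A)=\prod_i K_i$ a product of fields fails to be (weakly) normal in $Q(A)$ exactly when $Q(A)$ contains a nontrivial \emph{elementary} (weakly) subintegral extension of $A$, and every defining equation of such an extension can be solved explicitly, one field factor at a time. I would first record two structural inputs, both going back to Swan and Yanagihara and underlying the existence statements already cited. (a) If $t\in Q(A)$ satisfies $t^2,t^3\in A$, then $A\subseteq A[t]$ is subintegral; if $t\in Q(A)$ satisfies $t^p,pt\in A$ for a prime $p$, then $A\subseteq A[t]$ is weakly subintegral. (b) The seminormalization $\prescript{+}{Q(A)}{A}$ (resp.\ the weak normalization $\prescript{*}{Q(A)}{A}$) is the union of the tower $A=A_0\subseteq A_1\subseteq\cdots$, where $A_{n+1}$ is generated over $A_n$ by all elementary elements of $Q(A)$ of the relevant type. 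The point of (b) is that to prove $A=\prescript{+}{Q(A)}{A}$ (resp.\ $A=\prescript{*}{Q(A)}{A}$) it is enough to show $A_1=A$, i.e.\ that no elementary element lies outside $A$; an immediate induction then gives $A_n=A$ for every $n$.

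For seminormality, suppose first that $A=\prescript{+}{Q(A)}{A}$ and take $b,c\in A$ with $b^3=c^2$. Writing $b=(b_i)$ and $c=(c_i)$ in $Q(A)=\prod_iK_i$, I would set $a_i=c_i/b_i$ when $b_i\neq0$ and $a_i=0$ otherwise (so $c_i=0$ as well); a direct check gives $a^2=b$ and $a^3=c$ in $Q(A)$, whence $a^2,a^3\in A$ and $a\in\prescript{+}{Q(A)}{A}=A$ by (a). Conversely, if $A$ satisfies Definition~\ref{def:intrinsic}$(\ref{def:sn})$ and $t\in Q(A)$ has $t^2,t^3\in A$, then applying the intrinsic condition to $b=t^2$, $c=t^3$ produces $a\in A$ with $a^2=t^2$ and $a^3=t^3$; comparing in each $K_i$ (treating $t_i=0$ and $t_i\neq0$ separately) forces $a=t$, so $t\in A$. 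Thus $A_1=A$, and $A=\prescript{+}{Q(A)}{A}$ by (b).

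For weak normality, I would handle the seminormality clause and the extra clause of Definition~\ref{def:intrinsic}$(\ref{def:wn})$ in parallel with the same idea. Since subintegral extensions are weakly subintegral, $\prescript{+}{Q(A)}{A}\subseteq\prescript{*}{Q(A)}{A}$; hence $A=\prescript{*}{Q(A)}{A}$ forces $A=\prescript{+}{Q(A)}{A}$, and the seminormal case reduces the seminormality clause to the previous paragraph. If $A=\prescript{*}{Q(A)}{A}$ and we are given $b,c,e\in A$ and a nonzerodivisor $d$ with $c^p=bd^p$ and $pc=de$, I would set $t=c/d\in Q(A)$ and note $t^p=b$ and $pt=e$ lie in $A$, so $t\in\prescript{*}{Q(A)}{A}=A$ by (a) and $a=t$ works. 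Conversely, given the intrinsic weak normality condition and an elementary element $t\in Q(A)$ with $t^p,pt\in A$, I would use that $Q(A)$ is the localization of $A$ at its nonzerodivisors to write $t=c/d$ with $c\in A$ and $d\in A$ a nonzerodivisor; clearing denominators turns $t^p=b$ and $pt=e$ into $c^p=bd^p$ and $pc=de$ in $A$, so Definition~\ref{def:intrinsic}$(\ref{def:wn})$ yields $a\in A$ with $a^p=b=t^p$ and $pa=e=pt$. Comparing in each $K_i$---splitting on whether $\operatorname{char}K_i=p$, and using that fields have no nonzero nilpotents---gives $a=t$, so $t\in A$ and $A_1=A$.

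The componentwise algebra in $\prod_iK_i$ is routine, so I expect the main obstacle to be the structural inputs (a) and (b), and in particular the claim that the tower in (b) stabilizes already at the first layer. Establishing (a) amounts to a short computation of the fiber ring $A[t]\otimes_A\kappa(\fp)$ over each prime $\fp$, checking that it is local with residue field a trivial (resp.\ purely inseparable) extension of $\kappa(\fp)$; establishing (b) requires the characterization of elements (weakly) subintegral over $A$ as those reachable by a finite tower of elementary extensions, which is exactly what makes the single check $A_1=A$ suffice. I would either cite these facts from Swan and Yanagihara or reprove them, after which the four implications above close the argument.
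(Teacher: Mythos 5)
Your proposal is correct, but it takes a genuinely different route from the paper. The paper's proof is essentially a citation: the seminormal case is quoted directly from \cite[p.\ 216]{Swa80}, and the weakly normal case is deduced by noting that $A \subseteq Q(A)$ forces $A$ to be reduced and that $Q(A)$, being a product of fields, is weakly normal, whereupon \cite[Corollary to Proposition 4]{Yan85} applies. You instead reconstruct the underlying argument from scratch: you reduce both equivalences to the claim that $A$ equals its (weak) normalization in $Q(A)$ as soon as no elementary element ($t^2,t^3 \in A$, resp.\ also $t^p, pt \in A$) lies outside $A$, and then translate between the intrinsic equations of Definition \ref{def:intrinsic} and elementary elements by explicit componentwise computations in $\prod_i K_i$ (division in the nonzero components, and Frobenius injectivity or invertibility of $p$ in each field to force $a = t$). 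Those computations are all correct, and what your approach buys is a self-contained proof whose only external inputs are structure theory, much of which the paper already records. Indeed, your input (a) is the easy fiber computation you describe, and for input (b) the full tower description is more than you need: the usable form is that, for an \emph{integral} extension, closure of $A$ under elementary (weakly) subintegral elements implies $A$ is seminormal (resp.\ weakly normal) in the extension, which is Swan's result that subintegral extensions are built from elementary ones \cite[Theorem 2.8]{Swa80} combined with the criteria in Proposition \ref{prop:hamanncriterion}. One step you should make explicit: $Q(A)$ is not integral over $A$, so to invoke these criteria you must replace $Q(A)$ by the integral closure $A'$ of $A$ in $Q(A)$, observing that every elementary element of $Q(A)$ over $A$ is integral over $A$ and hence lies in $A'$, and that the seminormalization and weak normalization of $A$ in $Q(A)$ agree with those in $A'$ because (weakly) subintegral extensions are integral. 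With that adjustment, your four implications close the argument; the cost relative to the paper's proof is that you must carry these structural citations (or reprove them), whereas the paper's one-line proof delegates the entire content to Swan and Yanagihara.
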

\begin{proof}
  For seminormality, this statement is shown in \cite[p.\ 216]{Swa80}.
  \par We now consider the statement for weak normality.
  First, we note that in either case $A$ is reduced, since $A \subseteq Q(A)$.
  Thus, the statement follows from \cite[Corollary to Proposition 4]{Yan85}
  since $Q(A)$ is weakly normal by
  \citeleft\citen{Yan85}\citemid Corollary to Proposition 2\citepunct
  \citen{Vit11}\citemid Lemma 3.13\citeright.
\end{proof}
\par To define $p$-seminormality, we use the following fact.
\begin{citedthm}[{\cite[Theorem 4.1]{Swa80}}]\label{thm:swa41}
  Let $A$ be a reduced ring.
  Then, there is a subintegral extension $A \subseteq B$ with $B$ seminormal.
  Any such extension is universal for maps of $A$ to seminormal rings:
  If $C$ is seminormal and $\varphi\colon A \to C$ is a ring map, then $\varphi$
  has a unique extension $\psi\colon B \to C$.
  Furthermore, if $\varphi$ is injective, then $\psi$ is injective.
\end{citedthm}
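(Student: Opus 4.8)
The plan is to realize the seminormalization concretely inside the product of residue fields at the minimal primes, and then to read off universality and injectivity from the construction. Since $A$ is reduced, the nilradical $N(A)$ is the intersection of the minimal primes, so $A$ embeds into $K \coloneqq \prod_{\fp \in \Min(A)} \kappa(\fp)$, a product of fields and hence itself seminormal. I would build $B$ as the smallest subring of $K$ that contains $A$ and is closed under the following \emph{elementary move}: whenever two elements $b, c$ satisfy $b^3 = c^2$, adjoin the element $a \in K$ defined componentwise by $a_\fp = c_\fp/b_\fp$ when $b_\fp \neq 0$ and $a_\fp = 0$ otherwise (note that $b_\fp = 0$ forces $c_\fp = 0$, since $c_\fp^2 = b_\fp^3$ in the field $\kappa(\fp)$). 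A direct componentwise check gives $a^2 = b$ and $a^3 = c$ in $K$, so $a$ is exactly the solution demanded by Definition \ref{def:intrinsic}$(\ref{def:sn})$.

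The key local computation is that each elementary move $R \subseteq R[a]$ (for $A \subseteq R \subseteq K$) is subintegral. Integrality is immediate from $a^2 = b \in R$. For the residue fields, at a prime $\fq$ of $R[a]$ over $\fp \subseteq R$ one has $\bar{a}\,\bar{b} = \bar{c}$ (since $ab = a \cdot a^2 = c$) and $\bar{a}^2 = \bar{b}$, so if $\bar{b} \neq 0$ then $\bar{a} = \bar{c}/\bar{b} \in \kappa(\fp)$, while if $\bar{b} = 0$ then $\bar{a}^2 = 0$ in the field $\kappa(\fq)$ forces $\bar{a} = 0 \in \kappa(\fp)$; either way $\kappa(\fp) \to \kappa(\fq)$ is an isomorphism and the fiber of $\Spec R[a] \to \Spec R$ over $\fp$ is a single point. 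Combined with surjectivity from integrality (Lying Over), this shows $\Spec R[a] \to \Spec R$ is a bijection, so the move is subintegral; moreover $R[a] \subseteq K$ is automatically reduced. As subintegral extensions compose and are preserved under the filtered colimits used at limit stages, $A \subseteq B$ is subintegral.

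The main obstacle is that an elementary move introduces new elements, and hence new pairs $(b,c)$ with $b^3 = c^2$, so a single pass need not yield a seminormal ring. I would resolve this with a standard small-object argument: iterate the move transfinitely, taking unions at limit ordinals, and observe that any offending pair appears at some bounded stage and is solved at the next, so the process stabilizes at an ordinal whose cofinality exceeds the relevant cardinality bound; the stable ring is the desired seminormal $B$. For universality, given a seminormal ring $C$ and a map $\varphi \colon A \to C$, seminormality supplies for each adjoined $a$ (with $a^2 = b$, $a^3 = c$, and $b,c$ already in the image) a \emph{unique} $a' \in C$ satisfying $a'^2 = \varphi(b)$ and $a'^3 = \varphi(c)$; sending $a \mapsto a'$ defines $\psi \colon B \to C$ by transfinite recursion, and uniqueness of the $a'$ forces $\psi$ to be the unique extension of $\varphi$. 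Finally, if $\varphi$ is injective, then $B/\ker\psi \hookrightarrow C$ is reduced and $A \to B/\ker\psi$ is again injective, hence subintegral by Lying Over; comparing spectra, $V(\ker\psi)$ surjects onto $\Spec A$ while $\Spec B \to \Spec A$ is a bijection, forcing $V(\ker\psi) = \Spec B$ and therefore $\ker\psi \subseteq N(B) = 0$.
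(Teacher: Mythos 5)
The paper does not actually prove this statement---it is quoted from \cite[Theorem 4.1]{Swa80}---so your argument has to stand on its own. Its existence half does: since $A$ is reduced it embeds in $K = \prod_{\fp \in \Min(A)} \kappa(\fp)$, each elementary move $R \subseteq R[a]$ is subintegral (your residue-field computation also yields injectivity of $\Spec R[a] \to \Spec R$ once you note $R[a] = R + Ra$, so that whether $r + sa$ lies in a prime over $\fp$ is determined by $\fp$), subintegrality passes to composites and filtered unions, and the iteration stabilizes---in fact already at stage $\omega$ if you adjoin solutions for all pairs at each step, since each constraint involves only two elements. So the construction of a subintegral extension $A \subseteq B$ with $B$ seminormal is correct.

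The genuine gap is in the universal property. Uniqueness of the element $a' \in C$ with $a'^2 = \varphi(b)$ and $a'^3 = \varphi(c)$ (which itself uses that seminormal rings are reduced) gives at most uniqueness of an extension; it does not produce one. To ``send $a \mapsto a'$'' you must show that $r + sa \mapsto \varphi(r) + \varphi(s)a'$ is well defined, i.e., compatible with \emph{every} relation $a$ satisfies over $R$, and this is not formal: your $R[a]$ sits inside $K$ and is in general a proper quotient of $R[T]/(T^2 - b, T^3 - c)$. For instance, with $A = \Z$, $b = 4$, $c = 8$, your canonical $a$ is just $2$, so $R[a] = \Z$, whereas $\Z[T]/(T^2-4, T^3-8) \neq \Z$. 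What must be proved is that $r + sa = 0$ in $K$ forces $\varphi(r) + \varphi(s)a' = 0$ in $C$; this is the crux of Swan's proof, and it can be filled in by observing that $r^2 = s^2 b$ and $r^3 = -s^3 c$ hold in $R$, passing to $C/\fq$ for each prime $\fq$ of $C$, treating the cases $\varphi(b) \in \fq$ and $\varphi(b) \notin \fq$ separately, and using that $C$ is reduced. Your proposal asserts this step without argument, and ``uniqueness of the $a'$'' cannot substitute for it. A second omission: the theorem asserts that \emph{any} subintegral extension $A \subseteq B$ with $B$ seminormal is universal (this clause is what makes ``the seminormalization'' in the definition following Theorem \ref{thm:swa41} unambiguous), whereas your recursion is tied to the specific presentation of your $B$ by elementary moves; for an arbitrary such $B$ one needs the structural fact that every subintegral extension is a filtered union of chains of elementary subintegral extensions, which is \cite[Theorem 2.8]{Swa80}, quoted in Remark \ref{rem:swanremarks}. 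Finally, a small slip: in your injectivity argument, ``hence subintegral by Lying Over'' should say that $A \to B/\ker\psi$ is injective and integral, so $\Spec(B/\ker\psi) \to \Spec A$ is surjective by Lying Over; with that rewording, the rest of that argument is correct.
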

\begin{citeddef}[{\cite[p.\ 218]{Swa80}}]
  Let $A$ be a reduced ring.
  The ring $B$ in Theorem \ref{thm:swa41} is the \textsl{seminormalization} of
  $A$.
\end{citeddef}
We can now define $p$-seminormality.
\begin{citeddef}[{\cite[pp.\ 219--220]{Swa80}}]
  Let $p$ be an integer.
  \begin{enumerate}[label=$(\roman*)$,ref=\roman*]
    \item If $A \subseteq B$ is a ring extension, we say that $A$ is
      \textsl{$p$-seminormal} in $B$ if for all
      $x \in B$ satisfying $x^2,x^3,px \in A$, we have $x \in A$.
    \item A ring $A$ is \textsl{$p$-seminormal} if it is reduced and $A$ is
      $p$-seminormal in the seminormalization $B$ of $A$.
    \item A ring extension $A \subseteq B$ is \textsl{$p$-subintegral} if it is
      a filtered union of subextensions built up by finite sequences of
      extensions of the form $C \subseteq C[x]$ where $x^2,x^3,px \in C$.
    \item Let $A \subseteq B$ be a ring extension.
      The \textsl{$p$-seminormalization} of $A$ in $B$ is the largest
      subring $\prescript{+p}{B}A$ of $B$ that is $p$-subintegral over $A$.
  \end{enumerate}
\end{citeddef}
\begin{remark}\label{rem:swanremarks}
  We state some facts about $p$-seminormality from \cite[p.\ 219]{Swa80}.
  \begin{enumerate}[label=$(\roman*)$,ref=\roman*]
    \item By \cite[Theorem 2.5]{Swa80} (or Proposition
      \ref{prop:hamanncriterion} below), $0$-seminormality is the same as
      seminormality.
      By \cite[Theorem 2.8]{Swa80}, $0$-subintegral extensions are the same as
      subintegral extensions.
    \item\label{rem:pcanbeprime}
      If $p \ne 0$, a ring is $p$-seminormal if and only if it is
      $q$-seminormal for all primes $q \mid p$.
      We may therefore restrict to the case when $p \ge 0$ is
      either zero or a prime number.
    \item\label{rem:pseminormalintrinsic}
      There is an intrinsic definition of $p$-seminormality: $A$ is
      $p$-seminormal if and only if it is reduced and if for all $b,c,d \in A$
      such that $b^3=c^2$, $d^2=p^2b$, $d^3=p^3c$, there exists $a \in A$ such
      that $a^2=b$, $a^3=c$.
  \end{enumerate}
  We also note that by $(\ref{rem:pcanbeprime})$ above, a ring $A$ with finitely
  many minimal primes is
  $p$-seminormal for every prime $p > 0$ if and only if it is a $F$-ring in the
  sense of \cite[Definition 1.4]{Asa78}.
  See also \cite[Remark 2$(ii)$]{Yan93}.
\end{remark}
We can now state Asanuma and Hamann's characterization of steadfastness.
The case when $A$ is reduced is due to Hamann \cite[Theorem 2.6]{Ham75}, 
Asanuma \cite[Theorem 3.13]{Asa78}, and Swan \cite[Theorem 9.1]{Swa80}.
\begin{citedthm}[{\citeleft\citen{Asa82}\citemid Proposition 1.6\citepunct
  \citen{Ham83}\citemid Theorem 2.4\citeright}]\label{thm:steadfastiff}
  Let $A$ be a ring.
  The following are equivalent:
  \begin{enumerate}[label=$(\roman*)$,ref=\roman*]
    \item $A$ is steadfast.
    \item $A_\red$ is steadfast.
    \item $A_\red$ is $p$-seminormal for all primes $p > 0$.
  \end{enumerate}
\end{citedthm}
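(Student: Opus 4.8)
The plan is to prove the chain by first isolating the role of nilpotents in the equivalence (i) $\Leftrightarrow$ (ii), and then treating the reduced equivalence (ii) $\Leftrightarrow$ (iii), which is the conceptual heart. For (i) $\Leftrightarrow$ (ii) I would use that forming polynomial rings commutes with passing to the reduced quotient: since the nilpotents of $A[X_1,\ldots,X_m]$ are exactly the polynomials with nilpotent coefficients, one has $(A[X_1,\ldots,X_m])_\red = A_\red[X_1,\ldots,X_m]$. Thus any isomorphism $A[X,X_1,\ldots,X_n] \cong B[Y_1,\ldots,Y_n]$ descends to $A_\red[X,X_1,\ldots,X_n] \cong B_\red[Y_1,\ldots,Y_n]$, so steadfastness of $A_\red$ yields $A_\red[X] \cong B_\red$. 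The substantive point is the converse lifting: upgrading such a reduced isomorphism, together with the extension $A \subseteq B$, to an $A$-algebra isomorphism $A[X] \cong B$. This is exactly the passage from the reduced to the general case that is attributed separately to Asanuma and Hamann, and it is not a formality, since one must control how the nilradical of $B$ sits over that of $A$.

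Then I would concentrate on the reduced equivalence (ii) $\Leftrightarrow$ (iii), i.e.\ the statement that a reduced ring is steadfast if and only if it is $p$-seminormal for every prime $p > 0$. By Remark \ref{rem:swanremarks}(\ref{rem:pcanbeprime}) it suffices to analyze one prime $p$ at a time, and by Remark \ref{rem:swanremarks}(\ref{rem:pseminormalintrinsic}) the failure of $p$-seminormality is detected by explicit data: elements $b,c,d \in A$ with $b^3 = c^2$, $d^2 = p^2 b$, $d^3 = p^3 c$ admitting no $a \in A$ with $a^2 = b$ and $a^3 = c$, equivalently an element $x$ of the seminormalization of Theorem \ref{thm:swa41} with $x^2,x^3,px \in A$ but $x \notin A$. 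For (iii) $\Rightarrow$ (ii) the target is a structure theorem for ``stably polynomial'' $A$-algebras: given $B$ with $A[X,X_1,\ldots,X_n] \cong B[Y_1,\ldots,Y_n]$, one transports $X$ across the isomorphism, eliminates the auxiliary variables $Y_j$, and recognizes a genuine variable inside $B$, so that $B \cong A[X]$. The $p$-seminormality hypotheses enter precisely as the vanishing of the obstruction to this elimination: the elements $x$ above are exactly what could otherwise obstruct descending the recovered variable from an extension back into $A$.

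For the converse (ii) $\Rightarrow$ (iii) I would argue contrapositively and manufacture counterexamples to cancellation out of a failure of $p$-seminormality, following Hamann's and Asanuma's examples. Starting from data $(b,c,d)$ as above with no root $a$, one constructs an $A$-subalgebra $B$ of a polynomial ring over $A$ that becomes isomorphic to a polynomial ring after adjoining further variables but is not isomorphic to $A[X]$ as an $A$-algebra; the missing element $x$ is what prevents the twist from being trivialized, so $A$ fails to be steadfast.

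The main obstacle is the structure theory underlying both nontrivial directions: classifying the $A$-algebras that become polynomial rings after adjoining variables, and verifying that the $p$-seminormality conditions measure exactly the obstruction to such an algebra being an honest $A[X]$. The bookkeeping with the auxiliary variables — in particular reducing the general-$n$ statement to a single-variable cancellation — is technical but routine next to pinning down this obstruction, which carries the real weight of the theorem.
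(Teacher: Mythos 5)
The paper offers no proof of this statement for you to be compared against: it is a \emph{cited} theorem, quoted from \cite[Proposition 1.6]{Asa82} and \cite[Theorem 2.4]{Ham83} and used in the paper purely as a black box (e.g.\ to deduce part (ii) of Theorem \ref{thm:pseminormallifts} from part (i), and likewise in Theorem \ref{thm:pseminormalpowerseries}). So your proposal has to stand on its own, and it does not: it is an architecture, not an argument, and every implication that carries weight is stated as a target rather than proved. For (iii) $\Rightarrow$ (ii), the plan to ``transport $X$ across the isomorphism, eliminate the auxiliary variables $Y_j$, and recognize a genuine variable inside $B$,'' with $p$-seminormality entering ``precisely as the vanishing of the obstruction,'' is a description of what Hamann, Asanuma, and Swan prove --- the reduced case is \cite[Theorem 9.1]{Swa80}, resting on Asanuma's structure theory of stably polynomial algebras in \cite{Asa78} --- but you give no mechanism connecting a ring isomorphism of polynomial rings in $n+1$ variables to the element-level condition that $x^2,x^3,px \in A$ forces $x \in A$. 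For (ii) $\Rightarrow$ (iii), the counterexample construction from a failure of $p$-seminormality (the actual content of Hamann's examples in \cite{Ham75}) is likewise invoked --- ``one constructs an $A$-subalgebra $B$ of a polynomial ring over $A$\ldots'' --- but never carried out: no candidate $B$ is written down, and no reason is given why it becomes polynomial after adjoining variables yet is not $A$-isomorphic to $A[X]$.

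Even the reduction (i) $\Leftrightarrow$ (ii), which you treat as the soft half, is only half supplied. The descent direction is fine: $N(A[X_1,\ldots,X_m]) = N(A)[X_1,\ldots,X_m]$ for polynomial rings over any commutative ring, so an isomorphism upstairs induces $A_\red[X,X_1,\ldots,X_n] \cong B_\red[Y_1,\ldots,Y_n]$, and steadfastness of $A_\red$ yields $A_\red[X] \cong B_\red$. But what (i) demands is an $A$-algebra isomorphism $A[X] \cong B$, and you explicitly concede that lifting the reduced isomorphism ``is not a formality'' without providing the lift; controlling how $N(B)$ sits over $N(A)$ is exactly the point of the two papers this theorem is cited from. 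In sum, your outline correctly locates the difficulties and matches the shape of the literature, but it proves none of the three implications: at precisely the points where the theorem is hard, the proposal defers to the theorem itself.
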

\subsection{Hamann's criterion}
We now state a version of Hamann's criterion for seminormality and the
corresponding result for weak normality.
The statement $(\ref{prop:hamanncriterionsn})$ is due to Hamann for Nagata
rings \cite[Propositions 2.10* and 2.11*]{Ham75}, and is proved in
\citeleft\citen{Rus80}\citemid Theorem 1\citepunct
\citen{LV81}\citemid Proposition 1.4\citeright\ in general; see also
\citeleft\citen{BC79}\citemid Theorem 1\citepunct
\citen{GH80}\citemid Theorem 1.1\citepunct
\citen{Swa80}\citemid Theorem 2.5\citeright.
The statement $(\ref{prop:hamanncriterionwn})$ is due to Itoh
\cite[Proposition 1]{Ito83} and Yanagihara \cite[Theorem 1]{Yan83}.
\begin{proposition}
  \label{prop:hamanncriterion}
  Let $A \subseteq B$ be an integral ring extension.
  \begin{enumerate}[label=$(\roman*)$,ref=\roman*]
    \item\label{prop:hamanncriterionsn}
      The following are equivalent:
      \begin{enumerate}[label=$(\alph*)$,ref=\alph*]
        \item $A \subseteq B$ is seminormal.
        \item For every element $x \in B$ satisfying $x^m,x^{m+1} \in A$ for some
          integer $m \ge 1$, we have $x \in A$.
      \end{enumerate}
    \item\label{prop:hamanncriterionwn}
      The following are equivalent:
      \begin{enumerate}[label=$(\alph*)$,ref=\alph*]
        \item $A \subseteq B$ is weakly normal. 
        \item $A \subseteq B$ is seminormal and for all $x \in B$ satisfying
          $x^p,px \in A$ for some prime $p > 0$, we have $x \in A$.
      \end{enumerate}
  \end{enumerate}
\end{proposition}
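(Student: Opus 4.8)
The plan is to treat both parts of Proposition~\ref{prop:hamanncriterion} by the same mechanism. To prove an implication of the shape ``the given power conditions on $x$ force $x \in A$,'' I would show that the simple extension $A \subseteq A[x]$ is subintegral (for part~$(\ref{prop:hamanncriterionsn})$), respectively weakly subintegral (for part~$(\ref{prop:hamanncriterionwn})$). Since the seminormalization $\prescript{+}{B}{A}$ (resp.\ the weak normalization $\prescript{*}{B}{A}$) is by definition the largest such subring of $B$, this yields $A[x] \subseteq \prescript{+}{B}{A}$ (resp.\ $A[x] \subseteq \prescript{*}{B}{A}$), and under the hypothesis that $A$ is seminormal (resp.\ weakly normal) in $B$ this collapses to $x \in A$. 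In the converse directions I would reduce to the classical elementary case and invoke the structure theory of (weakly) subintegral extensions.

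For part~$(\ref{prop:hamanncriterionsn})$, the implication $(a)\Rightarrow(b)$ is the heart of the matter. Given $x \in B$ with $x^m, x^{m+1} \in A$, write $a = x^m$ and $c = x^{m+1}$. Since $x$ is a root of the monic polynomial $T^m - a$, the extension $A \subseteq A[x]$ is module-finite, hence integral, and $\Spec(A[x]) \to \Spec(A)$ is surjective. To see it is subintegral I would compute the fiber over each $\fp \in \Spec(A)$: it is a nonzero quotient of $\kappa(\fp)[T]/(T^m - \bar a,\, T^{m+1} - \bar c)$, in which the relation $\bar a\, T = \bar c$ holds. If $\bar a \neq 0$, then $\bar a$ is a unit and $T = \bar c / \bar a \in \kappa(\fp)$ is forced, so the fiber is $\kappa(\fp)$; if $\bar a = 0$, then $\bar c = 0$ and $T$ is nilpotent, so the fiber is a local Artinian ring with residue field $\kappa(\fp)$. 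In either case there is a unique prime over $\fp$ with trivial residue extension, so $A \subseteq A[x]$ is subintegral and $A[x] \subseteq \prescript{+}{B}{A} = A$, whence $x \in A$. The implication $(b)\Rightarrow(a)$ is then immediate, since taking $m = 2$ recovers the classical criterion that $x^2, x^3 \in A \Rightarrow x \in A$ characterizes seminormality of $A$ in $B$ (Hamann, Russell, and Leahy--Vitulli).

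Part~$(\ref{prop:hamanncriterionwn})$ follows the same template with weakly subintegral extensions. Weak normality forces seminormality because every subintegral extension is weakly subintegral, so $\prescript{+}{B}{A} \subseteq \prescript{*}{B}{A} = A$. For the power condition, given $x$ with $x^p, px \in A$ and $p$ prime, I would check that $A \subseteq A[x]$ is weakly subintegral by examining fibers: over $\fp$ of residue characteristic $p$ the relation $x^p \in A$ makes $\bar x$ purely inseparable over $\kappa(\fp)$ (the fiber being local with purely inseparable residue field), while over $\fp$ in which $p$ is invertible the relation $px \in A$ forces $\bar x \in \kappa(\fp)$. In both cases the fiber is a single point with purely inseparable residue extension, so $A[x] \subseteq \prescript{*}{B}{A} = A$ and $x \in A$.

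The main obstacle is the converse $(b)\Rightarrow(a)$ of part~$(\ref{prop:hamanncriterionwn})$. Here one assumes $A$ is seminormal in $B$ together with the $x^p, px$ criterion and must deduce $A = \prescript{*}{B}{A}$. Because $A$ is already seminormal, the extension $A \subseteq \prescript{*}{B}{A}$ has trivial subintegral part and is purely inseparable in residues, and the real content is that such an extension is a filtered union of elementary steps $C \subseteq C[x]$ with $x^p, px \in C$. Exhibiting, in any \emph{proper} such extension, an elementary element of this form lying outside $A$ — thereby contradicting the hypothesis — is exactly the structure theory of weakly subintegral extensions developed by Itoh and Yanagihara, which I would invoke rather than reprove.
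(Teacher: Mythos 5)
Your proposal is correct, and it takes a genuinely different route from the paper for the simple reason that the paper does not prove Proposition~\ref{prop:hamanncriterion} at all: it is presented as a cited background result, with part~$(\ref{prop:hamanncriterionsn})$ attributed to Hamann \cite{Ham75}, Rush \cite{Rus80}, and Leahy--Vitulli \cite{LV81}, and part~$(\ref{prop:hamanncriterionwn})$ to Itoh \cite{Ito83} and Yanagihara \cite{Yan83}. What you add is a direct, self-contained proof of the two implications of the shape ``the power conditions force $x \in A$,'' and your fiber computations are correct: if $\bar{a} \neq 0$ the relation $\bar{a}T = \bar{c}$ collapses the fiber to $\kappa(\fp)$; if $\bar{a} = 0$ then $\bar{c} = 0$ because $c^m = a^{m+1}$, so the fiber is local Artinian with residue field $\kappa(\fp)$; and in part~$(\ref{prop:hamanncriterionwn})$ the equation $T^p - \bar{a}$ in residue characteristic $p$ gives a local fiber with purely inseparable residue extension, while invertibility of $p$ forces $T = \overline{px}/p \in \kappa(\fp)$. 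Thus $A \subseteq A[x]$ is subintegral (resp.\ weakly subintegral), and the conclusion $x \in A$ follows from the fact that $\prescript{+}{B}{A}$ (resp.\ $\prescript{*}{B}{A}$) contains \emph{every} (weakly) subintegral subextension --- exactly the content of \cite[Lemma 2.2]{Swa80} and \cite[Lemma 2]{Yan85}, which the paper already invokes in its definitions. The converse directions you delegate to the literature are precisely the hard structural halves of the cited theorems (a proper subintegral, resp.\ weakly subintegral, extension contains an elementary element of the relevant shape), so your treatment is no less complete than the paper's and is non-circular; however, two citation corrections are in order. First, ``Russell'' should be Rush \cite{Rus80}. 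Second, the sources you name prove the $x^m,x^{m+1}$ form of the criterion, so once you have condition $(b)$ you may conclude seminormality from them directly, with no need to reduce to $m = 2$; if you do reduce to $m=2$, the precise reference for the $x^2,x^3$ characterization of $A = \prescript{+}{B}{A}$ is Swan's Theorem 2.5 \cite{Swa80}, since the $m=2$ condition does not formally imply the general-$m$ condition. As for what each approach buys: the paper's citation-only route is short and attributes the results where they belong, while your route isolates exactly which structural fact must be imported and proves the rest from the definitions; note also that the paper's own proof of the neighboring Proposition~\ref{prop:hamannforpseminormal} gives an elementary algebraic alternative to your fiber argument in part~$(\ref{prop:hamanncriterionsn})$ --- the numerical-semigroup descent in which all $x^n$ with $n \ge m^2 - m$ lie in $A$ and a minimal such $n > 1$ would yield $(x^{n-1})^2, (x^{n-1})^3 \in A$, a contradiction --- though that trick presupposes the $x^2,x^3$ characterization and does not by itself handle the weakly normal case, where your fiber argument adapts uniformly.
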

We prove that $p$-seminormality satisfies a version of Proposition
\ref{prop:hamanncriterion} as well.
\begin{proposition}\label{prop:hamannforpseminormal}
  Let $p$ be an integer, and consider an integral ring extension $A \subseteq
  B$.
  Then, $A$ is $p$-seminormal in $B$ if and only if
  for all $x \in B$ satisfying $x^m,x^{m+1},px \in A$ for some integer $m
  \ge 1$, we have $x \in A$.
\end{proposition}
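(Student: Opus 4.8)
The plan is to prove both implications, the forward one being immediate and the reverse one carrying the real content. For the forward direction, suppose the stated criterion holds and take $x \in B$ with $x^2, x^3, px \in A$. Applying the criterion with $m = 2$ (so that $x^m = x^2$ and $x^{m+1} = x^3$) gives $x \in A$, which is exactly the assertion that $A$ is $p$-seminormal in $B$.

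For the reverse direction, assume $A$ is $p$-seminormal in $B$ and fix $x \in B$ with $x^m, x^{m+1}, px \in A$ for some $m \ge 1$; the goal is $x \in A$. The starting observation is that forming products of $x^m$ and $x^{m+1}$ places $x^n$ in $A$ for every $n$ in the numerical semigroup generated by $m$ and $m+1$, and since $m$ and $m+1$ are coprime this semigroup contains all sufficiently large integers. Hence there is a $c$ with $x^n \in A$ for all $n \ge c$, and it suffices to push this conclusion down to the exponent $1$. I would do this by downward induction on $k$: assuming $x^j \in A$ for every $j > k$, I aim to show $x^k \in A$. The two ``square and cube'' inputs $x^{2k}, x^{3k} \in A$ are then automatic, since $2k, 3k > k$ and these larger exponents have already been handled.

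The main obstacle is the third input required to invoke $p$-seminormality: to apply the defining condition to $x^k$ one would need $px^k \in A$, and this genuinely can fail (for instance $2t^2 \notin \mathbb{Z}[2t,t^3,t^4] \subseteq \mathbb{Z}[t]$, even though $2t, t^3, t^4$ lie in the subring), so one cannot simply feed $x^k$ into the $m=2$ case. The device that resolves this is to peel powers of $p$ off one at a time. Since $px \in A$, we have $(px)^k = p^k x^k \in A$; then, by a descending inner induction on $j = k-1, k-2, \ldots, 0$, I apply $p$-seminormality to the element $u_j \coloneqq p^j x^k \in B$. Its square $p^{2j}x^{2k}$ and cube $p^{3j}x^{3k}$ lie in $A$ because $x^{2k}, x^{3k} \in A$, while $pu_j = p^{j+1}x^k$ lies in $A$ either as $(px)^k$ when $j = k-1$, or by the previous inner step otherwise. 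Thus each $u_j \in A$, and the terminal case $j = 0$ yields $x^k \in A$, completing the outer inductive step; taking $k = 1$ gives $x \in A$.

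I expect the only delicate points to be bookkeeping rather than ideas: recording cleanly why $x^n \in A$ for all large $n$, and ordering the two nested inductions so that the higher powers $x^{2k}$ and $x^{3k}$ are already available when the exponent $k$ is treated. The conceptual crux is entirely the observation that, although the individual multiples $px^k$ need not be in $A$, the repeated application of the $p$-seminormality condition to $p^j x^k$ lets one descend from $(px)^k \in A$ all the way to $x^k \in A$.
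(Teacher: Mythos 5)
Your proof is correct, and its skeleton—the numerical-semigroup observation that $x^j \in A$ for all sufficiently large $j$, followed by a descent on the exponent—is the same as the paper's, which follows Brewer--Nichols. The substantive difference is your inner induction peeling off powers of $p$, and it is not mere bookkeeping: the paper's proof takes the minimal $n$ with $x^n, x^{n+1}, \ldots \in A$, observes $(x^{n-1})^2 = x^{2n-2} \in A$ and $(x^{n-1})^3 = x^{3n-3} \in A$, and then concludes $x^{n-1} \in A$ from $p$-seminormality of $A$ in $B$ while listing only $px \in A$ as the third input. But the definition, applied to the element $x^{n-1}$, requires $p x^{n-1} \in A$, and as your example $2t^2 \notin \mathbb{Z}[2t,t^3,t^4]$ shows, this does not follow formally from $px \in A$ together with high powers of $x$ lying in $A$ (it is automatic when $n = 2$, but not for $n \ge 3$). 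Your descending induction applying the defining condition to $p^j x^k$ for $j = k-1, k-2, \ldots, 0$, anchored at $(px)^k \in A$, is exactly the justification this step needs: it yields $p x^{n-1} \in A$ and hence $x^{n-1} \in A$. So your write-up is not just a correct alternative; it supplies a step that the paper's own argument elides (or, read literally, states incorrectly), at the modest cost of two nested inductions in place of a single minimal-counterexample argument. The shortest complete proof would keep the paper's minimal-$n$ formulation and insert your peeling argument to verify the missing membership $p x^{n-1} \in A$.
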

\begin{proof}
  The direction $\Leftarrow$ follows by setting $m = 2$.
  \par To show $\Rightarrow$, we follow \cite[p.\ 282]{BN83}.
  Let $n \ge 1$ be the minimal integer such that 
  \[
    x^n,x^{n+1},x^{n+2},\ldots \in A.
  \]
  Such an $n$ exists since $n = m(m+1)-m-(m+1)+1 = m^2-m$ works by
  \cite[Theorem 2.1.1]{Alf05}.
Suppose that $n > 1$. In this case, we find $2(n-1)=2n - 2 \ge n$ and $3(n-1)=3n
-3 \ge n$. Thus,
\begin{align*}
  \Set[\big]{(x^{n-1})^2 = x^{2n-2},  (x^{n-1})^3 = x^{3n-3}, px} \subseteq A.
\end{align*}
By the assumption that $A$ is $p$-seminormal in $B$, we see that $x^{n-1} \in A$.
But this contradicts the minimality of $n$.
Hence, we see that $n = 1$, i.e., $x \in A$.
\end{proof}

\section{\texorpdfstring{$p$}{p}-seminormality in power series
rings}\label{section:powerseries}
In this section, we prove Theorem \ref{thm:pseminormalpowerseries}.
In Theorem \ref{thm:hs86forsteadfast}, we also show that one cannot remove the
``reduced or Noetherian'' assumption in Theorem
\ref{thm:pseminormalpowerseries}$(\ref{thm:steadfastpowerseries})$.\medskip
\par We start with the following lemma.
\begin{lemma}[cf.\ {\citeleft\citen{Swa80}\citemid Corollary 3.4\citepunct
  \citen{Yan93}\citemid Lemma 4\citeright}]
\label{lem:swa34}
Let $p$ be an integer.
Suppose we have a ring extension $A \subseteq B$ where $B$ is $p$-seminormal.
Then, $A$ is $p$-seminormal if and only if $A$ is $p$-seminormal in $B$.
\end{lemma}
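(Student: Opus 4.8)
The plan is to carry out both implications inside the seminormal rings furnished by Theorem~\ref{thm:swa41}, reducing each to the defining condition of being ``$p$-seminormal in'' an extension. Since $B$ is $p$-seminormal it is reduced, and hence so is $A$; thus both admit seminormalizations. Write $A'$ and $B'$ for the seminormalizations of $A$ and of $B$ in the sense of Theorem~\ref{thm:swa41}, so that $A \subseteq A'$ and $B \subseteq B'$ are subintegral with $A'$ and $B'$ seminormal, and recall that ``$A$ is $p$-seminormal'' means precisely that $A$ is reduced and $p$-seminormal in $A'$. The first thing I would do is assemble everything into the single seminormal ring $B'$: applying the universal property of Theorem~\ref{thm:swa41} to the composite $A \hookrightarrow B \hookrightarrow B'$ yields an injection $A' \hookrightarrow B'$ over $A$, and I would use it to identify $A'$ with the seminormalization $\prescript{+}{B'}{A}$ of $A$ in $B'$, i.e.\ with the largest subring of $B'$ that is subintegral over $A$. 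After this we have $A \subseteq A' \subseteq B'$ and $A \subseteq B \subseteq B'$ inside one ambient ring.

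For ``$A$ $p$-seminormal $\Rightarrow$ $A$ $p$-seminormal in $B$'', take $x \in B$ with $x^2,x^3,px \in A$. Then $A \subseteq A[x]$ is $0$-subintegral and hence subintegral (Remark~\ref{rem:swanremarks}), and since $A[x] \subseteq B \subseteq B'$ while $A' = \prescript{+}{B'}{A}$ is the largest subintegral extension of $A$ in $B'$, we obtain $x \in A'$. As $A$ is $p$-seminormal in $A'$ and $x^2,x^3,px \in A$, this gives $x \in A$.

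For the converse, assume $A$ is $p$-seminormal in $B$. Since $A$ is reduced, it suffices to check that $A$ is $p$-seminormal in $A'$, so take $x \in A'$ with $x^2,x^3,px \in A$. Viewing $x$ in $A' \subseteq B'$, we have $x^2,x^3,px \in A \subseteq B$; because $B$ is $p$-seminormal, i.e.\ $p$-seminormal in $B'$, this forces $x \in B$. Now $x \in B$ satisfies $x^2,x^3,px \in A$, so the hypothesis that $A$ is $p$-seminormal in $B$ yields $x \in A$.

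I expect the main obstacle to be the identification $A' = \prescript{+}{B'}{A}$ in the first paragraph, which is what lets both directions run off the definitions: it uses the universal property of Theorem~\ref{thm:swa41} together with the standard facts that the seminormalization of $A$ in a seminormal ring is again seminormal and that a seminormal ring has no proper subintegral extension. It is worth noting where the hypotheses enter: the forward implication uses only that $B$ is reduced (to form $B'$), whereas the reverse implication genuinely uses that $B$ is $p$-seminormal, through the step that passes an element of $A' \subseteq B'$ back into $B$.
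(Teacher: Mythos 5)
Your proof is correct and takes essentially the same approach as the paper's: both embed $A' \hookrightarrow B'$ via the universal property of Theorem \ref{thm:swa41} and then run the two implications directly off the definitions, with your reverse direction matching the paper's word for word. The only cosmetic difference is in the forward direction, where you deduce $x \in A'$ from $x^2,x^3 \in A$ by identifying $A'$ with $\prescript{+}{B'}{A}$ and invoking maximality, while the paper gets the same conclusion by citing \cite[Corollary 3.2]{Swa80}.
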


\begin{proof}
Note that $A$ is reduced since it is a subring of $B$.
Let $A'$ be the seminormalization of $A$, and let $B'$ be the seminormalization
of $B$.
We have a commutative diagram
\[
  \begin{tikzcd}
    A \rar[hook]\dar[hook] & B\dar[hook]\\
    A' \rar[hook] & B'
  \end{tikzcd}
\]
with injective arrows by Theorem \ref{thm:swa41}.
\par For $\Rightarrow$,
consider an element $x \in B \subseteq B'$ such that $x^2,x^3,px \in A$.
Since $x^2,x^3 \in A$, we see that $x \in A'$ by \cite[Corollary 3.2]{Swa80}.
Since $A$ is $p$-seminormal, we then have $x \in A$.
\par For $\Leftarrow$, consider an element $x \in A'$ such that $x^2,x^3,px \in
A$.
Since $B$ is $p$-seminormal and $x \in A' \subseteq B'$ is an element such that
$x^2,x^3,px \in A \subseteq B$, we have $x \in B$.
But $A$ is $p$-seminormal in $B$. Hence $x \in A$.
\end{proof}

We now prove a stronger version of Theorem \ref{thm:pseminormalpowerseries} by
generalizing the strategy in \cite{BN83} used for $0$-seminormality.

\begin{theorem}[{cf.\ \cite[Theorem]{BN83}}]\label{thm:bn83}
  Let $p$ be an integer, and
  let $X_1,X_2,\ldots$ be indeterminates.
  \begin{enumerate}[label=$(\roman*)$,ref=\roman*]
    \item\label{thm:bn83i}
      Let $A \subseteq B$ be a ring extension.
      If $A$ is $p$-seminormal in $B$, then $A\llbracket
      X_1,X_2,\ldots,X_n\rrbracket$ is $p$-seminormal in $B\llbracket
      X_1,X_2,\ldots,X_n\rrbracket$ for all $n \ge 0$.
    \item\label{thm:bn83ii} Let $A$ be a ring.
      Then, $A$ is $p$-seminormal if and only if $A\llbracket
      X_1,X_2,\ldots,X_n\rrbracket$ is $p$-seminormal for all $n \ge 0$.
    \item\label{thm:bn83iimoreover} Let $A$ be a ring.
      If $A$ is $p$-seminormal and
      has only finitely many minimal primes, then $A\llbracket
      X_1,X_2,\ldots,X_n\rrbracket$ is $p$-seminormal in $Q(A\llbracket
      X_1,X_2,\ldots,X_n\rrbracket)$ for all $n \ge 0$.
  \end{enumerate}
\end{theorem}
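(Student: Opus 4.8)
The plan is to reduce to a single power series variable: iterating $A\llbracket X_1,\dots,X_n\rrbracket=\bigl(A\llbracket X_1,\dots,X_{n-1}\rrbracket\bigr)\llbracket X_n\rrbracket$ reduces every part to the case $n=1$. Part $(\ref{thm:bn83i})$ is the engine, so I would prove it first and deduce the rest. Given a ring extension $A\subseteq B$ with $A$ $p$-seminormal in $B$ and an element $f=\sum_{i\ge 0}b_iX^i\in B\llbracket X\rrbracket$ with $f^2,f^3,pf\in A\llbracket X\rrbracket$, the goal is to show $b_i\in A$ for all $i$, which I would do by induction on the least index $k$ with $b_k\notin A$ (if no such $k$ exists there is nothing to prove).

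The base case $k=0$ is immediate: reading off constant terms gives $b_0^2,b_0^3,pb_0\in A$, so $b_0\in A$ because $A$ is $p$-seminormal in $B$. The inductive step is transparent in the special configuration $b_0=\dots=b_{k-1}=0$: there $f=X^ku$ with $u(0)=b_k$, and dividing the memberships $f^2,f^3,pf\in A\llbracket X\rrbracket$ by the relevant power of $X$ yields $u^2,u^3,pu\in A\llbracket X\rrbracket$; comparing constant terms gives $b_k^2,b_k^3,pb_k\in A$, and hence $b_k\in A$. More generally, producing consecutive powers $b_k^m,b_k^{m+1}\in A$ together with $pb_k\in A$ suffices, since $b_k$ is then integral over $A$ and Proposition~\ref{prop:hamannforpseminormal} forces $b_k\in A$.

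The main obstacle is the genuine inductive step, in which $b_0,\dots,b_{k-1}\in A$ but do not vanish. Subtracting the truncation $g=\sum_{i<k}b_iX^i$ does not preserve the hypotheses: writing $f=g+X^ku$ one is left only with the twisted relations $2gu+X^ku^2\in A\llbracket X\rrbracket$ and $3g^2u+3gX^ku^2+X^{2k}u^3\in A\llbracket X\rrbracket$, alongside $pu\in A\llbracket X\rrbracket$, and the nonzero constant term $b_0$ of $g$ contaminates every coefficient one might use to isolate a power of $b_k$. Untangling these relations so as to extract $b_k^m,b_k^{m+1}\in A$ is precisely the coefficient bookkeeping of \cite{BN83} for the seminormal case, and the delicate point---which I expect to be the crux of the whole argument---is to verify that this bookkeeping survives, and is in fact organized by, the additional datum $pb_k\in A$.

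Granting $(\ref{thm:bn83i})$, I would finish as follows. In $(\ref{thm:bn83ii})$ the implication $\Leftarrow$ is the case $n=0$. For $\Rightarrow$, let $C$ be the seminormalization of $A$ (Theorem~\ref{thm:swa41}), so $A$ is $p$-seminormal in $C$; part $(\ref{thm:bn83i})$ then makes $A\llbracket X\rrbracket$ $p$-seminormal in $C\llbracket X\rrbracket$. As $C$ is seminormal, the case $p=0$ in \cite{BN83} shows $C\llbracket X\rrbracket$ is seminormal, hence $p$-seminormal (the condition defining $p$-seminormality is that for seminormality weakened by the extra hypothesis $px\in A$, so every seminormal ring is $p$-seminormal); Lemma~\ref{lem:swa34} then promotes ``$p$-seminormal in $C\llbracket X\rrbracket$'' to ``$p$-seminormal,'' and iterating handles all $n$. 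For $(\ref{thm:bn83iimoreover})$, reducedness of $A$ and finiteness of its minimal primes $\fp_1,\dots,\fp_r$ give an embedding $A\llbracket X\rrbracket\hookrightarrow\prod_i(A/\fp_i)\llbracket X\rrbracket$ into a finite product of domains, so $A\llbracket X\rrbracket$ is reduced with finitely many minimal primes and $Q(A\llbracket X\rrbracket)$ is a product of fields by \cite[Corollary 3.6]{Swa80}; its seminormalization therefore lies inside $Q(A\llbracket X\rrbracket)$. Any $\xi\in Q(A\llbracket X\rrbracket)$ with $\xi^2,\xi^3,p\xi\in A\llbracket X\rrbracket$ lies in that seminormalization by \cite[Corollary 3.2]{Swa80}, so $\xi\in A\llbracket X\rrbracket$ by $(\ref{thm:bn83ii})$, which is exactly $p$-seminormality of $A\llbracket X\rrbracket$ in $Q(A\llbracket X\rrbracket)$.
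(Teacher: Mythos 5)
Your part $(\ref{thm:bn83i})$ is not a proof, and part $(\ref{thm:bn83i})$ is the engine on which everything else in your write-up depends. At the point you yourself identify as the crux, you write that you ``expect'' the Brewer--Nichols coefficient bookkeeping to survive the extra datum $pb_k\in A$, but you never carry it out; moreover, the formulation you aim for --- extracting consecutive powers $b_k^m,b_k^{m+1}\in A$ of the first coefficient not in $A$ directly from the twisted relations $2gu+X^ku^2,\,3g^2u+3gX^ku^2+X^{2k}u^3\in A\llbracket X\rrbracket$ --- is not how the argument actually closes, and it is unclear it can be made to close in that form. The paper's proof runs on a genuinely stronger induction hypothesis: writing $f=\sum_i a_iX^i$, one proves by induction on $m$ that \emph{every} monomial $a_0a_1^{n_1}\cdots a_m^{n_m}$ \emph{and} every monomial $pa_1^{n_1}\cdots a_m^{n_m}$ lies in $A$. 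The two families must be proved simultaneously because they feed into each other: setting $\tilde f=a_0+\cdots+a_mX^m$ and $g=(f-\tilde f)^n$, the series $a_0f^2g$, $pa_0g$, $p^2g$ lie in $A\llbracket X\rrbracket$ precisely because the inductive hypothesis applies to $a_0\tilde f^s$ and $p\tilde f^s$; reading off the coefficient of $X^{(m+1)n}$ gives $a_0^3a_{m+1}^n,\,pa_0a_{m+1}^n,\,p^2a_{m+1}^n\in A$, and Proposition \ref{prop:hamannforpseminormal} (applied via $(a_0a_{m+1}^n)^s=a_0^{s-3}\cdot a_0^3a_{m+1}^{sn}$ and $(pa_{m+1}^n)^s=p^{s-2}\cdot p^2a_{m+1}^{sn}$) then upgrades these to $a_0a_{m+1}^n,\,pa_{m+1}^n\in A$, after which the general monomials follow. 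Only with all of this in hand does one get $a_0a_m\in A$ for every $m$, hence $(f-a_0)^2,(f-a_0)^3,p(f-a_0)\in A\llbracket X\rrbracket$, so that one may replace $f$ by $(f-a_0)/X$ and iterate --- and it is only \emph{after} this peeling that the first problematic coefficient becomes a constant term with $b_k^2,b_k^3,pb_k\in A$. The need to track $p$-multiples of all monomials (not merely $pb_k$), and the intertwining of the two families, is the actual new content for $p\neq 0$; none of it appears in your proposal.

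Granting $(\ref{thm:bn83i})$, your deductions of $(\ref{thm:bn83ii})$ and $(\ref{thm:bn83iimoreover})$ are correct, and your route to $(\ref{thm:bn83ii})$ genuinely differs from the paper's: you pass to the seminormalization $C$ of $A$ and invoke the seminormal case of \cite{BN83} to see that $C\llbracket X\rrbracket$ is seminormal, hence $p$-seminormal, then apply Lemma \ref{lem:swa34}. The paper instead embeds $A$ into $B=\prod_{\fp\in\Min(A)}Q(A/\fp)$, notes $B\llbracket X_1,\ldots,X_n\rrbracket$ is a product of power series rings over fields (normal, hence $p$-seminormal, with arbitrary products handled by \cite[Proposition 5.4]{Swa80}), and applies Lemma \ref{lem:swa34} twice; this keeps the argument self-contained rather than importing \cite{BN83}, and works uniformly with no finiteness assumption on $\Min(A)$. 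Your $(\ref{thm:bn83iimoreover})$ is essentially the paper's, with the finiteness of the set of minimal primes of $A\llbracket X_1,\ldots,X_n\rrbracket$ proved inline via the embedding into $\prod_i(A/\fp_i)\llbracket X_1,\ldots,X_n\rrbracket$ rather than cited from the proof of \cite[Theorem (2)]{BN83}, and with Lemma \ref{lem:swa34} unwound into its proof.
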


\begin{proof}
  We first show $(\ref{thm:bn83i})$.
  By induction, it is enough to show the case when $n=1$, i.e., given $A$
  $p$-seminormal in $B$, we want to show that $A\llbracket X \rrbracket$ is
  $p$-seminormal in $B\llbracket X \rrbracket$.

      Let $f = \sum_{i=0}^\infty a_iX^i \in B\llbracket X \rrbracket$ such that
    $f^2, f^3, pf \in A\llbracket X \rrbracket$.
    We want to show $f \in A\llbracket X\rrbracket$.
    In fact, since $a_0 \in A$ by looking at constant terms and using the
    $p$-seminormality of $A$ in $B$, it suffices to show $(f - a_0)^2,(f - a_0)^3,p(f
    - a_0) \in A\llbracket X \rrbracket$, as we can replace $f$ with $(f-a_0)/X$ and iterate as needed.
      On expanding, we have
      \begin{alignat*}{3}
        p(f - a_0) &={}& pf &- pa_0,\\
        (f - a_0)^2 &={}& f^2 &- 2a_0f + a_0^2,\\
        (f - a_0)^3 &={}& f^3 &- 3a_0f^2 + 3 a_0^2f - a_0^3.
      \end{alignat*}
      Thus, it is enough to show that
      $a_0a_m \in A$ for all $m \in \Z_{\ge0}$.
      By induction on $m \in \Z_{\ge0}$, we will show that for all
      $\{n_1,n_2,\ldots,n_m\} \subseteq \Z_{\ge0}$, we have
      \[
        a_0a_1^{n_1}a_2^{n_2} \cdots a_m^{n_m} \in A \qquad \text{and} \qquad
        pa_1^{n_1}a_2^{n_2} \cdots a_m^{n_m} \in A.
      \]
      For $m = 0$, we have $a_0 \in A$ by the $p$-seminormality of $A$ in $B$
      as above, and we always have $p \in A$.
      \par For the inductive step, suppose the statement is true for $m$.
      Let
      \[
        \Tilde{f} = a_0 + a_1X + \cdots + a_mX^m.
      \]
      Let $n \geq 0$, and define
      \begin{align*}
        g &= (f - \Tilde{f})^{n} = \sum_{s = 0}^{n}
        \binom{n}{s} (-1)^s f^{n-s}\Tilde{f}^s.
      \intertext{Then, we have}
        a_0f^2g &= \sum_{s = 0}^{n}
        \binom{n}{s} (-1)^s f^{n-s+2}a_0\Tilde{f}^s \in A\llbracket X
        \rrbracket,
      \intertext{since $n-s+2 \geq 2$ implies $f^{n-s+2} \in A\llbracket
        X\rrbracket$ for all $s$, and
  $a_0\Tilde{f}^s \in A[X]$ by the inductive hypothesis.
  In particular, we find that the coefficient
  $a_0^3a_{m+1}^{n}$ of $X^{(m+1)n}$ in $f^2g$ lies in $A$.
      Similarly, we have}
        pa_0g &= \sum_{s = 0}^{n}
        \binom{n}{s} (-1)^s pf^{n-s}a_0\Tilde{f}^s \in A\llbracket X
        \rrbracket,\\
        p^2g &= \sum_{s = 0}^{n}
        \binom{n}{s} (-1)^s pf^{n-s}p\Tilde{f}^s \in A\llbracket X
        \rrbracket,
      \end{align*}
      since $pf^{n-s} \in A\llbracket X \rrbracket$ for all $s$ by hypothesis,
      and $a_0\tilde{f}^s,p\Tilde{f}^s \in A[X]$ by the inductive hypothesis.
      In particular, we find that the coefficients $pa_0a_{m+1}^{n}$ and
      $p^2a_{m+1}^n$ of $X^{(m+1)n}$ in $pa_0g$ and $p^2g$ respectively lie in
      $A$.
      \par We claim that $a_0a_{m+1}^n,pa_{m+1}^n \in A$ for all $n \ge
      0$.
      We have
      \begin{align*}
        (a_0a_{m+1}^n)^s &= a_0^{s-3}a_0^3a_{m+1}^{sn} \in A
        \intertext{for all $s \ge 3$ and $pa_0a_{m+1}^n \in A$ by the previous paragraph,
        and hence $a_0a_{m+1}^n \in A$ by the $p$-seminormality of $A$ in $B$
        and by Proposition \ref{prop:hamannforpseminormal}.
        Similarly, we have}
        (pa_{m+1}^n)^s &= p^{s-2}p^2a_{m+1}^{sn} \in A
      \end{align*}
      for all $s \ge 2$ and $p(pa_{m+1}^n) = p^2a_{m+1}^n \in A$
      by the previous paragraph, and hence $pa_{m+1}^n \in A$ for all $n \ge 0$
      by the $p$-seminormality of $A$ in $B$.
      \par Now, let $n_1, n_2, \ldots, n_{m+1} \in \mathbb{Z}_{\geq 0}$, and let
    $\alpha = a_0 a_1^{n_1}a_2^{n_2} \cdots a_{m + 1}^{n_{m + 1}}$ and $\beta =
    pa_1^{n_1}a_2^{n_2} \cdots a_{m + 1}^{n_{m + 1}}$.
      Then, we find
      \begin{align*}
        \alpha^s &= a_0^{s-4}(a_0a_1^{sn_1}a_2^{sn_2} \cdots
        a_m^{sn_m})a_0^3a_{m+1}^{sn_{m+1}} \in A
        \intertext{for all $s \ge 4$ by the inductive hypothesis, and}
        \beta^s &= p^{s-3}(pa_1^{sn_1}a_2^{sn_2} \cdots a_m^{sn_m})p^2a_{m+1}^{sn_{m+1}} \in A
      \end{align*}
      for all $s \geq 3$ by the inductive hypothesis.
      Moreover, we have
      \begin{align*}
        p\alpha &= (a_0a_1^{n_1}a_2^{n_2} \cdots a_m^{n_m})(pa_{m+1}^{n_{m+1}}) \in A\\
        p\beta &= (pa_1^{n_1}a_2^{n_2} \cdots a_m^{n_m})(pa_{m+1}^{n_{m+1}}) \in A
      \end{align*}
      for all $s \ge 2$ by the inductive hypothesis and the previous paragraph.
      Thus, by the $p$-seminormality of $A$ in $B$ and by
      Proposition \ref{prop:hamannforpseminormal}, we have
      $\alpha,\beta \in A$.
      Since $a_0a_m$ is of the form $\alpha$,
      this shows that $a_0a_m \in A$ for all $m \in \Z_{\ge0}$ as
      desired.\smallskip

      We now show $(\ref{thm:bn83ii})$.
  It is enough to show: If $A$ is $p$-seminormal, then $A\llbracket
  X_1,X_2,\ldots,X_n\rrbracket$ is $p$-seminormal.
      Let
      \[
        M = \Min(A) \coloneqq \Set[\big]{\mathfrak{p}_i \in \Spec(A) \given
        \mathfrak{p}_i\ \text{is a minimal prime}}.
      \]
      Observe:
      \begin{equation*}
        A \subseteq \prod_{\mathfrak{p}_i \in M}\frac{A}{\mathfrak{p}_i}
        \subseteq \prod_{\mathfrak{p}_i \in
        M}Q\biggl(\frac{A}{\mathfrak{p}_i}\biggr) \eqqcolon B.
      \end{equation*}
      Here, the first injection holds since $A$ is reduced, and hence
      $N(A) = \bigcap_{\fp_i
      \in M} \fp_i = 0$.
      Each $Q(A/\mathfrak{p}_i)$ is $p$-seminormal, since it is regular and
      therefore normal (see Figure \ref{fig:variantsnormal}).
      By \cite[Proposition 5.4]{Swa80}, $B$ is $p$-seminormal.
      Then, by Lemma \ref{lem:swa34}, $A$ is $p$-seminormal in $B$.

      Now each $Q(A/\mathfrak{p}_i)\llbracket X_1,X_2,\ldots,X_n \rrbracket$ is
      $p$-seminormal (it is a formal power series ring over a field, which is
      regular and therefore normal; see Figure \ref{fig:variantsnormal}), and
      from $(\ref{thm:bn83i})$, we find
      that $A\llbracket X_1,X_2,\ldots,X_n \rrbracket$ is $p$-seminormal in
      $B\llbracket X_1,X_2,\ldots,X_n \rrbracket$. But 
      \begin{equation*}
        B\llbracket X_1,X_2,\ldots,X_n \rrbracket \cong \prod_{\mathfrak{p}_i
        \in M} \biggl(Q\biggl(\frac{A}{\mathfrak{p}_i}\biggr)
        \llbracket X_1,X_2,\ldots,X_n \rrbracket\biggr)
      \end{equation*}
      which is $p$-seminormal by \cite[Proposition 5.4]{Swa80}.
      Then by Lemma \ref{lem:swa34} again, $A\llbracket X_1,X_2,\ldots,X_n
      \rrbracket$ is $p$-seminormal.\smallskip

      It remains to show $(\ref{thm:bn83iimoreover})$.
      As seen above, $A\llbracket X_1,X_2,\ldots,X_n
      \rrbracket$ is $p$-seminormal. Via Lemma \ref{lem:swa34}, it suffices to
      show that $Q(A\llbracket X_1,X_2,\ldots,X_n \rrbracket)$ is $p$-seminormal.

      Observe that if $A\llbracket X_1,X_2,\ldots,X_n \rrbracket$ has only
      finitely many minimal primes, then by \cite[Corollary 3.6]{Swa80}, we have
      that $Q(A\llbracket X_1,X_2,\ldots,X_n \rrbracket)$ is a direct product
      of fields (which are $p$-seminormal as before).
      By applying
      \cite[Proposition 5.4]{Swa80}, we get that $Q(A\llbracket
      X_1,X_2,\ldots,X_n \rrbracket)$ is $p$-seminormal. Therefore, it suffices
      to note that $A\llbracket X_1,X_2,\ldots,X_n \rrbracket$ has only finitely
      many minimal primes as is shown in the proof of \cite[Theorem (2)]{BN83}.
\end{proof}
In the Noetherian case, we have the following result.
\begin{proposition}[cf.\ {\cite[Proposition]{BN83}}]\label{prop:bn83}
Let $p$ be an integer, and
let $X_1,X_2,\ldots$ be indeterminates.
Let $A$ be a ring for which there exists a non-negative integer $m$
such that for all $a\in N(A)$, we have $a^m = 0$.
If $A_\red$
is $p$-seminormal, then
$( A\llbracket X_1,X_2,\ldots,X_n \rrbracket )_\red$
is $p$-seminormal for all $n \ge 0$.
In particular, this is true when $A$ is Noetherian.
\end{proposition}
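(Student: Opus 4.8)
The plan is to reduce to the reduced case, which is handled by Theorem~\ref{thm:bn83}$(\ref{thm:bn83ii})$, by first computing the nilradical of the power series ring. Concretely, I would show that the bounded-index hypothesis forces
\[
  N\bigl(A\llbracket X_1,X_2,\ldots,X_n\rrbracket\bigr) = N(A)\llbracket X_1,X_2,\ldots,X_n\rrbracket
\]
for all $n \ge 0$. Granting this, reducing $A\llbracket X_1,X_2,\ldots,X_n\rrbracket$ modulo the ideal of power series with all coefficients in $N(A)$ gives
\[
  \bigl(A\llbracket X_1,X_2,\ldots,X_n\rrbracket\bigr)_\red \cong A_\red\llbracket X_1,X_2,\ldots,X_n\rrbracket.
\]
Since $A_\red$ is $p$-seminormal by hypothesis, Theorem~\ref{thm:bn83}$(\ref{thm:bn83ii})$ shows that $A_\red\llbracket X_1,X_2,\ldots,X_n\rrbracket$ is $p$-seminormal, and hence so is $(A\llbracket X_1,X_2,\ldots,X_n\rrbracket)_\red$. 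The case of Noetherian $A$ then follows once we observe that $N(A)$ is a finitely generated nil ideal, hence nilpotent, so in particular $a^m = 0$ for all $a \in N(A)$ with $m$ the nilpotency index.

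The inclusion $\subseteq$ of the displayed nilradical identity holds for any ring and needs no hypothesis: if $f \in A\llbracket X_1,X_2,\ldots,X_n\rrbracket$ is nilpotent, then for each minimal prime $\fp$ of $A$ its image in $(A/\fp)\llbracket X_1,X_2,\ldots,X_n\rrbracket$ is nilpotent, hence zero because a power series ring over the domain $A/\fp$ is again a domain; thus every coefficient of $f$ lies in $\bigcap_{\fp}\fp = N(A)$.

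The reverse inclusion---equivalently, that every power series with coefficients in $N(A)$ is nilpotent---is where the bounded-index hypothesis is used, and I expect it to be the main obstacle. The tempting argument is that $N(A)$ is nilpotent, so that $\bigl(N(A)\llbracket X_1,X_2,\ldots,X_n\rrbracket\bigr)^{k} \subseteq N(A)^{k}\llbracket X_1,X_2,\ldots,X_n\rrbracket = 0$; this is exactly the Noetherian case, but in general a uniform bound $a^m = 0$ on $N(A)$ does \emph{not} make $N(A)$ nilpotent. For instance $\mathbb{F}_2[x_1,x_2,\ldots]/(x_i^2 : i \ge 1)$ satisfies $a^2 = 0$ for every $a$ in its nilradical $(x_1,x_2,\ldots)$, yet $x_1 x_2 \cdots x_k \neq 0$ for all $k$. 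One must instead extract cancellation from the relations obtained by expanding $(b_1 + \cdots + b_r)^m = 0$ for $b_1,\ldots,b_r \in N(A)$. Writing $f = \sum_\beta a_\beta X^\beta$ with $a_\beta^m = 0$, the coefficient of a fixed monomial in $f^M$ is an integer combination of products $\prod_\beta a_\beta^{\alpha_\beta}$ in which every exponent $\alpha_\beta$ is at most $m-1$, each weighted by a multinomial coefficient; the crux is that these polarization identities force every such term to vanish once $M$ exceeds a bound depending only on $m$. (For example $f^2 = 0$ when $m = 2$, using $2b_1b_2 = 0$; and one checks that $f^4 = 0$ when $m = 3$, using $6b_1^2b_2 = 0$ and $6b_1b_2b_3 = 0$.) This produces a uniform exponent $M(m)$ with $f^{M(m)} = 0$, which proves the required nilpotency for all $n$ simultaneously and, as a bonus, shows that $N(A\llbracket X_1,X_2,\ldots,X_n\rrbracket)$ again satisfies a bounded-index condition.
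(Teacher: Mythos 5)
Your reduction is exactly the paper's: it, too, proves the proposition by establishing $N(A\llbracket X_1,X_2,\ldots,X_n\rrbracket) = N(A)\llbracket X_1,X_2,\ldots,X_n\rrbracket$ under the bounded-index hypothesis, deducing $(A\llbracket X_1,X_2,\ldots,X_n\rrbracket)_\red \cong A_\red\llbracket X_1,X_2,\ldots,X_n\rrbracket$, and applying Theorem \ref{thm:bn83}$(\ref{thm:bn83ii})$; the Noetherian case is dispatched the same way you do. The one genuine difference is that the paper does not prove the nilradical identity at all --- it cites \cite[Theorem 14]{Bre81} --- whereas you propose a self-contained proof. Your inclusion $\subseteq$ via minimal primes is correct, your $\mathbb{F}_2$-example showing that bounded index does not make $N(A)$ nilpotent is apt, and you have correctly localized where the hypothesis must enter.

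However, the statement you yourself call the crux --- that polarization forces a uniform exponent $M(m)$ with $f^{M(m)} = 0$ --- is asserted rather than proved; verifying $m = 2, 3$ does not supply the general mechanism, so your write-up has a gap exactly where the paper leans on Brewer. The claim is true and can be closed with two ingredients. First, for any $b_1,\ldots,b_m \in N(A)$, inclusion--exclusion gives
\[
  m!\, b_1 b_2 \cdots b_m \;=\; \sum_{S \subseteq \{1,\ldots,m\}} (-1)^{m-|S|}\Bigl(\sum_{i \in S} b_i\Bigr)^{m} \;=\; 0,
\]
since each $\sum_{i \in S} b_i$ lies in $N(A)$; hence any product of at least $m$ elements of $N(A)$ is killed by $m!$. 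Second, a term of a coefficient of $f^M$ that does not vanish outright has all exponents $\alpha_\beta \le m-1$, hence involves at least $M/(m-1)$ distinct coefficients $a_\beta$; by Legendre's formula, for each prime $p \le m$ the $p$-adic valuation of the attached multinomial coefficient is $\bigl(\sum_\beta s_p(\alpha_\beta) - s_p(M)\bigr)/(p-1) \ge \bigl(M/(m-1) - s_p(M)\bigr)/(p-1)$, where $s_p(\cdot)$ denotes the base-$p$ digit sum; since $s_p(M)$ grows only logarithmically in $M$, this exceeds $v_p(m!)$ for all $M$ larger than a bound depending only on $m$. For such $M$, each multinomial coefficient is divisible by $m!$ while $m!$ kills the corresponding product of $M \ge m$ elements of $N(A)$, so every term vanishes and $f^{M} = 0$. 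With this supplement your argument is complete, and it has two modest advantages over the paper's citation: it works for all $n$ at once (no induction on the number of variables), and the uniform bound $M(m)$ shows the bounded-index hypothesis is inherited by $A\llbracket X_1,X_2,\ldots,X_n\rrbracket$, a point that an induction on $n$ would otherwise need to justify. Alternatively, simply cite \cite[Theorem 14]{Bre81} as the paper does.
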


\begin{proof}
  By induction, it is enough to prove that if $A_\red$ is $p$-seminormal, then
  $(A\llbracket X \rrbracket)_\red$ is $p$-seminormal.

  By \cite[Theorem 14]{Bre81}, the condition on $A$ implies that
  $N(A\llbracket X \rrbracket) = N(A)\llbracket X \rrbracket$.
  Thus,
  \begin{equation*}
    \bigl(A\llbracket X \rrbracket\bigr)_\red =
     \frac{A\llbracket X \rrbracket}{N(A)\llbracket X \rrbracket}
    \cong \biggl(\frac{A}{N(A)}\biggr) \llbracket X \rrbracket = A_\red
    \llbracket X \rrbracket.
  \end{equation*}
  When $A_\red$ is $p$-seminormal, we find that
  $A_\red\llbracket X
  \rrbracket$ is $p$-seminormal by Theorem \ref{thm:bn83}$(\ref{thm:bn83ii})$.
\end{proof}
We can now prove Theorem \ref{thm:pseminormalpowerseries}.
\begin{customthm}{B}\label{thm:pseminormalpowerseries}
  Let $A$ be a ring, and let $X_1,X_2,\ldots$ be indeterminates.
  \begin{enumerate}[label=$(\roman*)$,ref=\roman*]
    \item\label{thm:pseminormalpowerseriesfixedp}
      Let $p$ be an integer.
      If $A$ is $p$-seminormal, then $A\llbracket X_1,X_2,\ldots,X_n
      \rrbracket$ is $p$-seminormal for all $n \ge 0$.
    \item\label{thm:steadfastpowerseries}
      Suppose that $A$ is either reduced or Noetherian.
      If $A$ is steadfast, then $A\llbracket X_1,X_2,\ldots,X_n
      \rrbracket$ is steadfast for all $n \ge 0$.
  \end{enumerate}
\end{customthm}
\begin{proof}
  $(\ref{thm:pseminormalpowerseriesfixedp})$ is Theorem
  \ref{thm:bn83}$(\ref{thm:bn83ii})$.
  For $(\ref{thm:steadfastpowerseries})$, one combines Asanuma and Hamann's
  characterization of steadfastness (Theorem \ref{thm:steadfastiff}) together
  with $(\ref{thm:pseminormalpowerseriesfixedp})$ in the reduced case or with
  Proposition \ref{prop:bn83} in the Noetherian case.
\end{proof}
\begin{remark}
  One can also ask whether the analogue of Theorem \ref{thm:bn83} holds
  for weak normality in the sense of Definition
  \ref{def:intrinsic}$(\ref{def:wn})$.
  Dobbs and Roitman proved this when $A$ is a domain \cite[Theorem 3 and
  Corollary 4]{DR95}, and
  Maloo proved the analogue of Theorem \ref{thm:bn83}$(\ref{thm:bn83i})$
  in general \cite[Theorem 9$(b)$]{Mal96}.
  Maloo's result implies the analogue of the special case of
  Theorem \ref{thm:bn83}$(\ref{thm:bn83ii})$ when $Q(A)$ is a product of fields
  by using the characterization in Theorem \ref{thm:snwnwhenqaprod}, and by
  replacing \cite[Proposition 5.4]{Swa80} and Lemma \ref{lem:swa34}
  with \citeleft\citen{Yan85}\citemid Corollary to Proposition 2\citepunct
  \citen{Vit11}\citemid Lemma 3.13\citeright\ and \cite[Propositions 3 and
  4]{Yan85}, respectively.
  This in turn yields the analogue of the special case of Proposition
  \ref{prop:bn83} when $Q(A)$ is a product of fields.
  This also yields the complete
  analogue of Theorem \ref{thm:bn83}$(\ref{thm:bn83iimoreover})$.
\end{remark}

In \cite[Question on p.\ 284]{BN83}, Brewer and Nichols asked: If a ring $A$ is such that $A_\red$ is seminormal (with no additional conditions such as Noetherianity, etc.), is it always true that $(A\llbracket x\rrbracket)_\red$ is also seminormal? This was answered in the negative by Hamann and Swan in \cite{HS86}, whose construction we modify to get the following result.
\begin{theorem}[cf.\ {\cite[Theorem 2.1]{HS86}}]\label{thm:hs86forsteadfast}
For every prime $q > 0$, there exists a non-Noetherian non-reduced ring $S$ of
characteristic $q$ such that $S_\red$ is $p$-seminormal for every integer $p > 0$, but
$(S\llbracket X\rrbracket)_\red$ is not $q$-seminormal.
In particular, there exists a ring $S$ such that
$S$ is steadfast, but $S\llbracket X \rrbracket$ is not steadfast.
\end{theorem}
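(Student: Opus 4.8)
The plan is to reduce the assertion to the Hamann--Swan phenomenon that seminormality of $A_\red$ need not ascend to $(A\llbracket X\rrbracket)_\red$, and then to translate between seminormality and $p$-seminormality using that $S$ has characteristic $q$. The key observation is that for a reduced ring $R$ of characteristic $q$, the conditions ``$R$ is $p$-seminormal for every integer $p > 0$'' and ``$R$ is seminormal'' coincide. Indeed, by Remark \ref{rem:swanremarks}(\ref{rem:pcanbeprime}) it suffices to treat primes $p$. If $p \ne q$, then $q \nmid p$, so $p$ is a unit in $R$, and given any $b, c, d$ as in the intrinsic criterion of Remark \ref{rem:swanremarks}(\ref{rem:pseminormalintrinsic}) the element $a = d/p$ satisfies $a^2 = d^2/p^2 = b$ and $a^3 = d^3/p^3 = c$; hence $R$ is automatically $p$-seminormal. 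If $p = q$, then $q = 0$ in $R$, so the hypotheses $d^2 = q^2 b$ and $d^3 = q^3 c$ read $d^2 = d^3 = 0$, forcing $d = 0$ by reducedness, and the criterion collapses to seminormality. The same observation, applied to $(S\llbracket X\rrbracket)_\red$ (which is reduced of characteristic $q$), shows that it fails to be $q$-seminormal precisely when it fails to be seminormal.

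With this reduction in hand, it remains to produce a non-Noetherian, non-reduced ring $S$ of characteristic $q$ with $S_\red$ seminormal but $(S\llbracket X\rrbracket)_\red$ not seminormal. I would take the ring of Hamann and Swan \cite[Theorem 2.1]{HS86} over a field of characteristic $q$ (say $\mathbb{F}_q$), adapting their construction so that the relevant witness lives in characteristic $q$. The structural feature that makes this work is that $S$ must carry nilpotents of unbounded index: one always has $N(S\llbracket X\rrbracket) \subseteq N(S)\llbracket X\rrbracket$, since the image of a nilpotent power series in $(S/\mathfrak{p})\llbracket X\rrbracket$ vanishes for every minimal prime $\mathfrak{p}$ and hence its coefficients lie in $N(S)$; equality holds once nilpotency is bounded by \cite[Theorem 14]{Bre81}, and the inclusion is strict exactly when it is not. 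Strictness produces a power series $w \in N(S)\llbracket X\rrbracket$ that is not itself nilpotent, so that the natural surjection $(S\llbracket X\rrbracket)_\red \twoheadrightarrow S_\red\llbracket X\rrbracket$ has nonzero kernel even though the target is seminormal by Theorem \ref{thm:bn83}(\ref{thm:bn83ii}).

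I would then verify the two halves separately. That $S_\red$ is seminormal is built into the construction and can be checked directly, or via Proposition \ref{prop:hamanncriterion}(\ref{prop:hamanncriterionsn}), from the defining relations. The failure of seminormality for $(S\llbracket X\rrbracket)_\red$ is where the work concentrates: using the non-nilpotent series $w$ above, I would exhibit elements $b, c \in (S\llbracket X\rrbracket)_\red$ with $b^3 = c^2$ for which no $a$ satisfies $a^2 = b$ and $a^3 = c$, equivalently an element $\theta = c/b$ of an integral extension with $\theta^2, \theta^3 \in (S\llbracket X\rrbracket)_\red$ but $\theta \notin (S\llbracket X\rrbracket)_\red$, and then invoke Proposition \ref{prop:hamanncriterion}(\ref{prop:hamanncriterionsn}). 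The main obstacle is precisely this step: pinning down $S$ and the elements $b, c$ so that the relation $b^3 = c^2$ holds in the reduced power series ring while its solution is obstructed exactly by $w$, that is, so that any putative common square and cube root $a$ would force $w$ to be nilpotent in $S\llbracket X\rrbracket$, contrary to its construction.

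Finally, the steadfastness statement follows formally from Asanuma and Hamann's characterization. Since $S_\red$ is seminormal and of characteristic $q$, the reduction in the first paragraph shows it is $p$-seminormal for every prime $p > 0$, so Theorem \ref{thm:steadfastiff} gives that $S$ is steadfast. On the other hand, $(S\llbracket X\rrbracket)_\red$ is not $q$-seminormal, hence not $p$-seminormal for all primes $p > 0$, and the contrapositive of Theorem \ref{thm:steadfastiff} gives that $S\llbracket X\rrbracket$ is not steadfast.
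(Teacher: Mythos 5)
Your opening reduction is correct, and it is genuinely different from what the paper does. For a reduced ring $R$ of prime characteristic $q$: if $p \ne q$ is prime, then $p$ is a unit in $R$, so in the intrinsic criterion of Remark \ref{rem:swanremarks}$(\ref{rem:pseminormalintrinsic})$ the element $a = d/p$ always works; if $p = q$, the hypotheses read $d^2 = d^3 = 0$, so $d = 0$ by reducedness and $q$-seminormality collapses to seminormality; and Remark \ref{rem:swanremarks}$(\ref{rem:pcanbeprime})$ then covers all integers $p > 0$. The paper does not exploit this collapse: instead it hard-wires the intrinsic criterion into the construction, adjoining a third family of nilpotent variables $\{d_\delta\}$ with relations forcing $D(X)^2 = q^2B(X)$ and $D(X)^3 = q^3C(X)$, so that $(B,C,D)$ directly witnesses the failure of $q$-seminormality. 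Your observation shows those $d$-variables are redundant in characteristic $q$ --- indeed, in the paper's ring $D(X)^2 = q^2B(X) = 0$, so $D(X)$ maps to $0$ in $(S\llbracket X\rrbracket)_\red$ anyway --- and the example you need is exactly the original Hamann--Swan ring taken over $\mathbb{F}_q$.

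The genuine gap is that you never produce that example: the construction of $S$ and the proof that $(S\llbracket X\rrbracket)_\red$ fails to be seminormal are precisely what you defer as ``the main obstacle,'' and they are the substance of the theorem; citing \cite[Theorem 2.1]{HS86} with the caveat that it must be ``adapted'' to characteristic $q$, without checking that the adaptation works, proves nothing. The gap is fillable because the Hamann--Swan argument is characteristic-free. Take $S = \mathbb{F}_q[\{b_\beta\},\{c_\gamma\}]/I$ with $I$ generated by $b_0^2, b_1^2, b_t^t, c_1^2, c_t^t$ (for $t \ge 2$) and the coefficients of $B(X)^3 - C(X)^2$; then $S_\red = \mathbb{F}_q$. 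If $(S\llbracket X\rrbracket)_\red$ were seminormal, lifting the square and cube root gives $B(X) = A(X)^2 + H(X)$ with $H(X)$ nilpotent of some index $N$. Choose an even $n$ with $2n > N$ and kill every variable except $b \coloneqq b_{2n}$ and $c \coloneqq c_{3n}$; the quotient is $\bar{S} = \mathbb{F}_q[b,c]/\langle b^{2n}, b^3 - c^2\rangle$, in which $bX^{2n} = A(X)^2 + H(X)$. Since $\bar{S}$ is Noetherian, $N(\bar{S}\llbracket X\rrbracket) = \langle b,c\rangle\bar{S}\llbracket X\rrbracket$ by \cite[Theorem 14]{Bre81}; reducing modulo $\langle b,c\rangle$ into the domain $\mathbb{F}_q\llbracket X\rrbracket$ forces $A(X) \in \langle b,c\rangle\bar{S}\llbracket X\rrbracket$, and then $c^2 = b^3$ gives $A(X)^2 = bE(X)$ with $E(X)$ nilpotent. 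Hence $H(X) = b\bigl(X^{2n} - E(X)\bigr)$ and $b^N\bigl(X^{2n}-E(X)\bigr)^N = 0$; as $X^{2n}-E(X)$ is a nonzerodivisor by \cite[Lemma 1.1]{HS86}, we get $b^N = 0$ in $\bar{S}$, contradicting $N < 2n$. No step here uses the characteristic, so your strategy does go through and is, modulo this verification, a cleaner route than the paper's; but as written the proposal leaves the decisive step unproved.
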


By this result, one needs some finiteness conditions on a
ring for steadfastness to be inherited when adjoining a formal power series
variable.
In the Noetherian case, we can deduce the $p$-seminormality of $(A\llbracket X_1,X_2,\ldots,X_n \rrbracket)_\red$ from the $p$-seminormality of $A_\red$ by Proposition \ref{prop:bn83}.

\begin{proof}
  The ``in particular'' statement follows from Theorem \ref{thm:steadfastiff},
  and hence it suffices to show the statement about $p$-seminormality.

Let $K$ be a field, and consider the ring $R = K[\{b_\beta\},\{c_\gamma\},\{d_\delta\}]_{\beta, \gamma, \delta \in \Z_{\geq 0}} $ where $\{b_\beta\}$, $\{c_\gamma\}$, and $\{d_\delta\}$ are sets of independent indeterminates.
Consider the ideal $I$ generated by the following subset of $R$:
\[
  \Set[\bigg]{b_0^2, b_1^2, b_t^t, c_1^2, c_t^t, d_1^2, d_t^t, \sum_{i+j+k
  = n}b_ib_jb_k - \sum_{i+j = n}c_ic_j, \sum_{i+j= n}d_id_j - q^2 b_n, \sum_{i+j
  +k= n}d_id_jd_k - q^3 c_n \given t \geq 2}.
\]
Set $S\coloneqq R/I$, which is constructed so that $b_\beta,c_\gamma,d_\delta$ are nilpotent elements, and 
\begin{equation*}
  B(X)^3 - C(X)^2 = D(X)^2 - q^2B(X) = D(X)^3-q^3C(X) = 0,
\end{equation*}
where we use the generic notation $F(X) = \sum_{i=0}^\infty f_iX^i$.
We will show that $S$ is an example of the desired form when $K$ is of
characteristic $q$.

First, we note that $S_\red = K$, which is a field. Since fields are regular,
they are normal and therefore $p$-seminormal for all $p>0$ (see Figure \ref{fig:variantsnormal}).

It remains to show that $(S\llbracket X\rrbracket)_\red$ is not $q$-seminormal.
We proceed by contradiction.
Suppose $(S\llbracket X\rrbracket)_\red$ is $q$-seminormal.
By Remark \ref{rem:swanremarks}$(\ref{rem:pseminormalintrinsic})$,
there exists some $A(X)$ such that $B(X) = A(X)^2 + H(X)$ in $S\llbracket X \rrbracket$, where $H(X)$ is nilpotent with some finite nilpotence degree $N$. 
Choose an even integer $n > N/2$, and define the ideal
\[
  J = \bigl\langle \{b_\beta\},\{c_\gamma\},\{d_\delta\} \bigm\vert
  \beta \neq 2n,\gamma \ne 3n \bigr\rangle.
\]
Let $b = b_{2n}$ and $c = c_{3n}$. Since our field $K$ is of characteristic $q$, we find
\begin{equation*}
    \bar{S} \coloneqq \frac{S}{J} = \frac{K[b,c]}{\langle b^{2n}, c^{3n}, b^3 - c^2
    \rangle}= \frac{K[b,c]}{\langle b^{2n}, b^3 - c^2 \rangle},
\end{equation*}
where we use the definition of $J$ and the characteristic of $K$ to make the relevant reductions. So 
\begin{equation*}
    bX^{2n} = A(X)^2 + H(X)
\end{equation*}
in $\bar{S}\llbracket X\rrbracket$.
Note that $N(\bar{S}) = \langle b,c \rangle$, and since $\bar{S}$ is Noetherian,
we have $N(\bar{S}\llbracket X\rrbracket) = \langle b,c \rangle\bar{S}\llbracket
X\rrbracket$ by \cite[Theorem 14]{Bre81}.
The nilpotency of $H(X)$ tells us that $H(X) \in \langle b,c \rangle\bar{S}\llbracket X\rrbracket$.
Thus, since $A(X)^2 = 0$ in $K\llbracket X\rrbracket$, we have $A(X) = 0$ in
$K\llbracket X\rrbracket$ and $A(X) \in \langle b,c \rangle\bar{S}\llbracket X\rrbracket$.

Write $A(X) = bA_0(X) + cA_1(X)$. Then, $A(X)^2 \in b\langle
b,c\rangle\bar{S}\llbracket X\rrbracket$ by virtue of the relation $b^3 - c^2 =
0$ in $\bar{S}$, so $A(X)^2 = b E(X)$ with $E(X) \in \langle b,c
\rangle\bar{S}\llbracket X\rrbracket$.
Thus, $E(X)$ is nilpotent, and in $\bar{S}\llbracket X\rrbracket$, we have
\begin{equation*}
    bX^{2n} - bE(X) = H(X) \implies b^N\bigl(X^{2n}-E(X)\bigr)^N =
    \bigl(H(X)\bigr)^N = 0.
\end{equation*}
By \cite[Lemma 1.1]{HS86}, $X^{2n} - E(X)$ is a nonzerodivisor, and hence $b^N =
0$ in $\bar{S}$.
Since $2n > N$, we have a contradiction.
Therefore,
$(\bar{S}\llbracket X\rrbracket)_\red$ is not $q$-seminormal.
\end{proof}

\begin{remark}
  Theorem \ref{thm:hs86forsteadfast} shows that without some finiteness
  conditions, for each prime $q>0$ we can find a steadfast ring $A$ such that
  $(A\llbracket X \rrbracket)_\red$ is not $q$-seminormal.
  On the other hand, this construction would not work if one chooses $K$ to be of characteristic zero in the proof.
\end{remark}

\section{Preliminary steps for Theorems \ref{thm:pseminormallifts} and \ref{thm:nwnlift}}\label{section:preliminary-steps}
We will perform four preliminary steps in this section that will be used in the following sections to prove Theorems \ref{thm:pseminormallifts} and \ref{thm:nwnlift}.
These steps follow the strategy in \cite{Hei08} for 0-seminormality.\medskip
\par We will proceed by contradiction, for which
we fix the following notation.
\begin{setup}\label{settingforthmsaandc}
  Let $\mathcal{P}$ be one of the following properties: $p$-seminormal (for a
  fixed integer $p$), weakly normal, or normal.
Let $(A,\mathfrak{m})$ be a Noetherian local ring, and let $y \in \mathfrak{m}$
be a nonzerodivisor such that $A/yA$ satisfies $\mathcal{P}$.
Note that $A$ is reduced by \cite[Proposition 3.4.6]{EGAIV2} (see also
\cite[Lemma 0$(ii)$]{BR82}).
\par Suppose furthermore that $A$ does not satisfy $\mathcal{P}$.
We can choose an element $x \in Q(A) - A$ that witnesses this in the following
manner.
\begin{itemize}
  \item If $\mathcal{P} =$ ``$p$-seminormal,'' we can choose $x$ such that
    $x^2,x^3,px \in A$.
  \item If $\mathcal{P} =$ ``weakly normal,'' then $A$ is seminormal by
    \cite[Main Theorem]{Hei08} (or Theorem \ref{thm:pseminormallifts}), and
    hence we can choose $x$ such that $x^p,px \in A$ for some prime $p > 0$.
  \item If $\mathcal{P} =$ ``normal,'' we can choose $x$ such that $x$ is
    integral over $A$.
\end{itemize}
\end{setup}

We now prove a series of lemmas, which allow us to assume that our
counterexample $A$ has additional properties.

\begin{lemma}[cf.\ {\cite[Lemma 1]{Hei08}}]\label{step1}
  In the setting of Setup \ref{settingforthmsaandc},
  we may assume that our counterexample $(A,\fm)$ with $x \in Q(A) - A$ is such that
  $(A :_A x) \eqqcolon \mathfrak{q}$ is a prime ideal of $A$ and that $\mathfrak{m}$ is minimal over $\mathfrak{q} + y A$, with $\height (\mathfrak{m}) \geq 2$.
\end{lemma}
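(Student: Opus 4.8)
The plan is to start from an arbitrary counterexample as in Setup~\ref{settingforthmsaandc} and, by multiplying the witness and then localizing once, produce a new counterexample satisfying the three asserted conditions. Write $\mathfrak{c} := (A :_A x)$; since $x \in Q(A) - A$ this is a proper ideal of $A$, and it contains a nonzerodivisor, namely any denominator of $x$. The first goal is to arrange that the conductor is prime. As $A$ is Noetherian and $A/\mathfrak{c} \neq 0$, I would choose an associated prime $\fq \in \operatorname{Ass}_A(A/\mathfrak{c})$, writing $\fq = (\mathfrak{c} :_A r)$ for some $r \in A \setminus \mathfrak{c}$, and replace $x$ by $rx$. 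Since $a\cdot(rx) \in A$ if and only if $ar \in \mathfrak{c}$, one gets $(A :_A rx) = (\mathfrak{c} :_A r) = \fq$, so the conductor of the new witness is the prime $\fq$, and $rx \notin A$ because $r \notin \mathfrak{c}$. One checks that $rx$ still witnesses the failure of $\mathcal{P}$: for $\mathcal{P}$ equal to ``$p$-seminormal'' we have $(rx)^2,(rx)^3,p(rx)\in A$; for ``weakly normal'' we have $(rx)^p,p(rx)\in A$ with $rx\in Q(A)-A$, which defeats the criterion of Proposition~\ref{prop:hamanncriterion}(\ref{prop:hamanncriterionwn}); and for ``normal'' the element $rx$ is again integral over $A$. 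Thus we may assume $\fq := (A :_A x)$ is prime.

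Next I would cut down by $y$. Since $\fq + yA \subseteq \fm$ is a proper ideal, choose a prime $\fm'$ minimal over $\fq + yA$ and replace $(A,\fm)$ by $(A_{\fm'},\fm' A_{\fm'})$. Localization preserves reducedness and keeps $y$ a nonzerodivisor, and because $y \in \fm'$ and each of the three properties is preserved under localization, $A_{\fm'}/yA_{\fm'}$ remains a $\mathcal{P}$-ring. As the colon ideal of the fixed element $x$ commutes with localization, the conductor becomes $\fq A_{\fm'}$, which is prime because $\fq \subseteq \fm'$; the element $x$ continues to witness the failure of $\mathcal{P}$; and by construction $\fm' A_{\fm'}$ is minimal over $\fq A_{\fm'} + yA_{\fm'}$. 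Hence, after this replacement, the first two asserted conditions hold.

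It remains to see $\height(\fm') \geq 2$, and the cleanest route is to observe that no counterexample can have height at most $1$. Indeed, $y$ is a nonzerodivisor lying in the maximal ideal, so $\dim A_{\fm'} \geq 1$; were it exactly $1$, then $A_{\fm'}/yA_{\fm'}$ would be a zero-dimensional local ring that is reduced (being $p$-seminormal, weakly normal, or normal), hence a field, which forces $\fm' A_{\fm'} = yA_{\fm'}$. Then $A_{\fm'}$ would be a one-dimensional regular local ring, i.e.\ a DVR, which is normal and therefore satisfies $\mathcal{P}$ (see Figure~\ref{fig:variantsnormal})---contradicting that $A_{\fm'}$ is a counterexample. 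So $\height(\fm') \geq 2$, completing the reduction. I expect the only delicate point to be the uniform check that the modified witness $rx$, and its image after localizing, genuinely defeats $\mathcal{P}$ in all three cases; by contrast, the height bound requires no engineering of the position of $y$ relative to $\fq$, since it falls out of the DVR dichotomy above.
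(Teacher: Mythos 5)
Your proposal is correct and follows essentially the same route as the paper: make the conductor prime via the colon trick (your use of an associated prime of $A/(A:_A x)$ is the same as the paper's radical of a primary component via \cite[Proposition 7.17]{AM69}), localize at a minimal prime of $\mathfrak{q}+yA$, and rule out $\height(\fm) \le 1$ by a regularity/normality contradiction. The only cosmetic difference is at the last step, where the paper quotes the deformation of regularity from \cite[Chapitre 0, Corollaire 17.1.8]{EGAIV2} while you observe directly that $\fm A_{\fm'} = yA_{\fm'}$ forces $A_{\fm'}$ to be a DVR; both are valid.
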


\begin{proof}
Since $x\not\in A$, we note that $(A:_A x)\subsetneq A$ is a proper ideal. By \cite[Theorem 7.13]{AM69}, we can then consider an irredundant primary decomposition
\begin{align*}
  (A:_A x) &= \mathfrak{q}_1\cap \mathfrak{q}_2\cap\cdots\cap \mathfrak{q}_n.   
\end{align*}
Define $\mathfrak{q} =\sqrt{\mathfrak{q}_1}$. By \cite[Proposition 7.17]{AM69}, there exists an element $r\in A$ such that
\begin{align*}
    \mathfrak{q}=\bigl((A:_A x) :_A r\bigr) = (A:_A rx),
\end{align*}
where the equality holds by the equivalences
\begin{align*}
    s \in (A :_A rx) &\Longleftrightarrow srx \in A \\
    &\Longleftrightarrow sr \in (A :_A x) \\
    &\Longleftrightarrow s \in \bigl((A :_A x) :_A r\bigr).
\end{align*}
\par Now let $\mathfrak{p}$ be a minimal prime of $\mathfrak{q} + yA$. Since
$\mathcal{P}$ is stable under localization \citeleft\citen{Swa80}\citemid
Proposition 5.4\citepunct \citen{Yan83}\citemid Corollary 1\citepunct
\citen{AM69}\citemid Proposition 5.13\citeright,
we know that $(A/yA)_\fp$ satisfies $\mathcal{P}$.
On the other hand, $rx \notin A_\fp$ since
	\[
	 (A_{\mathfrak{p}}:_{A_{\mathfrak{p}}}rx) =(A:_A rx)A_{\mathfrak{p}} = \mathfrak{q} A_{\mathfrak{p}}\subsetneq A_{\mathfrak{p}}.
	\]
  Note that for $p$-seminormality, we still have $(rx)^2, (rx)^3,prx \in
  A_{\mathfrak{p}}$, for weak normality, we still have
  $(rx)^p, prx \in A_{\mathfrak{p}}$, and for normality, we still have that $rx$
  is integral over $A_\mathfrak{p}$.

  We claim we may replace $A$ by $A_\fp$ and $x$ by $rx$.
  Set $\mathfrak{m} = \fp A_\fp$.
  We are left to show that $\height (\mathfrak{m}) \geq 2$. Suppose by way of contradiction that $\height (\mathfrak{m}) < 2$. Then $\dim(A/yA) = 0$. Since $A/yA$ is reduced, it is regular. We therefore see that $A$ is regular by \cite[Chapitre 0, Corollaire 17.1.8]{EGAIV2}. So $A$ is normal, and hence satisfies $\mathcal{P}$ (see Figure \ref{fig:variantsnormal}). This is a contradiction.
\end{proof}

The next step will only be used in the proof of Theorem
\ref{thm:pseminormallifts} (when $\mathcal{P} =$ ``$p$-seminormal''), but we
prove it for all three cases.
\begin{lemma}[cf.\ {\cite[Lemma 2]{Hei08}}]\label{step2}
  In the setting of Setup \ref{settingforthmsaandc} and Lemma \ref{step1},
  we may further assume that our counterexample $(A,\fm)$ 
  is $y$-adically complete.
\end{lemma}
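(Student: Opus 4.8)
The plan is to reduce to $y$-adic completion by passing from $A$ to its $y$-adic completion $\widehat{A} = \varprojlim_n A/y^nA$ and verifying that all the structural hypotheses accumulated so far are preserved. Concretely, I would set $\widehat{A} = \varprojlim_n A/y^n A$, observe that since $(A,\fm)$ is Noetherian local with $y \in \fm$, the completion $\widehat{A}$ is a faithfully flat Noetherian local $A$-algebra with maximal ideal $\widehat{\fm} = \fm\widehat{A}$, and that $y$ remains a nonzerodivisor in $\widehat{A}$ by flatness. The key identification is $\widehat{A}/y\widehat{A} \cong A/yA$ (the $y$-adic completion of $A/yA$ is $A/yA$ itself, since $y$ acts as $0$ there), so the quotient by $y$ is unchanged and in particular still satisfies $\mathcal{P}$.

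\emph{First} I would reinterpret the witness $x$. We have $x \in Q(A) - A$ with $(A :_A x) = \fq$ prime, and $x$ satisfies the relevant relations ($x^2,x^3,px \in A$ in the $p$-seminormal case). I would lift these relations to $\widehat{A}$: writing $x = u/v$ with $u, v \in A$ and $v$ a nonzerodivisor, the images $u, v$ in $\widehat{A}$ still define an element $x \in Q(\widehat{A})$ because $v$ remains a nonzerodivisor (flatness again), and the polynomial relations $x^2,x^3,px \in A \subseteq \widehat{A}$ persist verbatim. The crucial point is that $x \notin \widehat{A}$: this follows from faithful flatness, since $x \in \widehat{A}$ would give $(\widehat{A} :_{\widehat{A}} x) = \widehat{A}$, but $(A :_A x)\widehat{A} = \fq\widehat{A} \subsetneq \widehat{A}$ as $\fq$ is a proper ideal and flat base change commutes with the colon ideal $(A:_A x)$ against the finitely generated module structure. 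Thus $x$ witnesses the failure of $\mathcal{P}$ for $\widehat{A}$.

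\emph{Next} I would re-establish the conclusions of Lemma \ref{step1} for $\widehat{A}$. The colon ideal satisfies $(\widehat{A} :_{\widehat{A}} x) = \fq\widehat{A} = \widehat{\fq}$, which is prime because $A/\fq$ is a domain whose $y$-adic completion $\widehat{A}/\widehat{\fq}$ is again a domain — here I would invoke that $\fm$ is minimal over $\fq + yA$ with $\height\fm \geq 2$, so $\dim(A/\fq) \geq 1$ and $y$ is a nonzerodivisor modulo $\fq$, whence $A/\fq \hookrightarrow \widehat{A}/\widehat{\fq}$ is the completion of a domain along a regular element and stays a domain. Since $\widehat{\fm}$ is the unique maximal ideal and contains $\widehat{\fq} + y\widehat{A}$, and since $\widehat{A}/y\widehat{A} \cong A/yA$ has $\widehat{\fm}$ minimal over the image of $\fq$, the minimality of $\widehat{\fm}$ over $\widehat{\fq} + y\widehat{A}$ and the height bound $\height\widehat{\fm} = \dim\widehat{A} = \dim A \geq 2$ both transfer directly.

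\emph{The main obstacle} I anticipate is verifying that $\fq\widehat{A}$ is prime and equals the full colon ideal $(\widehat{A}:_{\widehat{A}} x)$, rather than being merely contained in it or failing to stay prime. Primality is not automatic under completion for arbitrary ideals, so the argument must genuinely use the geometric setup from Lemma \ref{step1} — specifically that $A/\fq$ is a domain with $y$ a nonzerodivisor on it and $\fm$ minimal over $\fq + yA$, which by the analytic structure of excellent or at least complete local rings forces $\widehat{A}/\fq\widehat{A}$ to remain a domain (one may need $A/\fq$ to be analytically irreducible, which holds here because $\widehat{A}/yA$ being reduced via $\mathcal{P}$ on $A/yA$ constrains the fibers). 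Once primality is secured, the remaining verifications are formal consequences of faithful flatness and the invariance $\widehat{A}/y\widehat{A} \cong A/yA$, so the completion $\widehat{A}$ with witness $x$ is a counterexample satisfying all the hypotheses of Setup \ref{settingforthmsaandc} and Lemma \ref{step1} that is additionally $y$-adically complete.
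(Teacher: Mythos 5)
Your first half matches the paper's proof: form the $y$-adic completion $S$ of $A$, note $S/yS \cong A/yA$ still satisfies $\mathcal{P}$, that $S$ is Noetherian local and faithfully flat over $A$, that nonzerodivisors of $A$ remain nonzerodivisors in $S$ so that $Q(A) \hookrightarrow Q(S)$, and that $x \notin S$ because otherwise $(A:_A x)S = (S:_S x) = S$, contradicting faithful flatness. The genuine gap is in your treatment of the conclusions of Lemma \ref{step1}. By flatness one does get $(S:_S x) = \fq S$, but your claim that $\fq S$ is \emph{prime} --- ``the completion of a domain along a regular element stays a domain'' --- is false in general: a Noetherian local domain need not be analytically irreducible, and nothing in Setup \ref{settingforthmsaandc} provides excellence or any hypothesis that would force this. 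For instance, for the nodal cubic $R = \bigl(k[u,v]/(v^2-u^2-u^3)\bigr)_{(u,v)}$ one has $\fm_R^2 \subseteq uR \subseteq \fm_R$, so the $u$-adic and $\fm_R$-adic topologies agree, and the $u$-adic completion $k\llbracket u,v\rrbracket/(v^2-u^2-u^3)$ is not a domain; reducedness of $A/yA$ does nothing to rule out such behavior for $A/\fq$. So your fallback justifications (``excellent or at least complete local rings,'' ``$\mathcal{P}$ constrains the fibers'') are not available, and the step fails.

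The paper avoids this entirely by \emph{not} asking $\fq S$ to stay prime: it re-runs the reduction of Lemma \ref{step1} inside $S$. Take a minimal prime $\fq_1$ of $(S:_S x)$; by \cite[Proposition 7.17]{AM69} there exists $s \in S$ with $\fq_1 = \bigl((S:_S x):_S s\bigr) = (S:_S sx)$, and the witnessing relations persist for $sx$ (e.g.\ $(sx)^2,(sx)^3,p(sx) \in S$), so one replaces $x$ by $sx$ and $\fq$ by $\fq_1$. Minimality of the maximal ideal $\fm S$ over $\fq_1 + yS$ is then checked by contraction: any prime $\mathfrak{n}$ with $\fq_1 + yS \subseteq \mathfrak{n} \subseteq \fm S$ contracts to a prime of $A$ trapped between $\fq + yA$ and $\fm$, hence equal to $\fm$ by Lemma \ref{step1}, which forces $\mathfrak{n} = \fm S$. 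Finally, $\height(\fm S) \geq \height(\fm) \geq 2$ follows from going down for the flat map $A \to S$; your alternative claim $\dim S = \dim A$ for the $y$-adic completion is true here but was asserted without justification, and going down is the cleaner route. With these repairs your argument becomes the paper's proof.
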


\begin{proof}
Let $S$ be the $y$-adic completion of $A$, i.e.,
\[S = \varprojlim_n (A/y^n A).\]
The quotient ring $S/yS$ satisfies $\mathcal{P}$ since $S/yS \cong A/yA$. The fact that $S$ is local follows from the fact that maximal ideals in $S/yS$ correspond to maximal ideals in $S$ containing $y$, and by \cite[Proposition 10.15$(iv)$]{AM69} every maximal ideal of $S$ contains $y$. We have $x^2, x^3 , px\in A \subseteq S$, but $x\not\in S$, since otherwise by
\cite[Chapter I, \S2, n\textsuperscript{o} 10, Proposition 12 and Remark on p.\
24]{Bou72}, we have
\[(A:_A x)S = (S :_S x) = S,\]
which cannot occur by \cite[Theorem 7.5$(ii)$]{Mat89} since $A \to S$ is
faithfully flat \cite[Theorem 8.14(3)]{Mat89}.
Here, we think of $x$ as an element of $Q(S)$, since nonzerodivisors in $A$ map
to nonzerodivisors in $S$, and hence $A \to S$ induces an inclusion $Q(A)
\hookrightarrow Q(S)$.

Finally, we must check that we may ensure that the conclusions of Lemma \ref{step1} can still be satisfied.

Let $\mathfrak{q}_1$ be a minimal prime of $(S:_S x)$. By \cite[Proposition 7.17]{AM69}, there exists $s \in S$ such that
\[\mathfrak{q}_1 = \bigl((S :_S x):_S s\bigr) = (S :_S sx).\]
  Note that for $p$-seminormality, we still have $(sx)^2, (sx)^3,psx \in
  S$, for weak normality, we still have
  $(sx)^p, psx \in S$, and for normality, we still have that $sx$
  is integral over $S$.

Denoting by $(A/\mathfrak{m})^\wedge$ the $y$-adic completion of
$A/\mathfrak{m}$, we have 
\[
  S/\mathfrak{m}S \cong A/\mathfrak{m} \otimes_A S \cong (A/\mathfrak{m})^\wedge
  \cong A/\mathfrak{m},
\]
where the middle isomorphism follows from \cite[Proposition 10.13]{AM69} and the
last isomorphism follows
since $y \in \mathfrak{m}$.
Thus, $\mathfrak{m}S$ is the maximal ideal of $S$ since $\mathfrak{m}$ is maximal in $A$. Next, we will show that $\mathfrak{m}S$ is a minimal prime of $\mathfrak{q}_1 + yS$.
If $\mathfrak{n}$ is a prime of $S$ with $\mathfrak{q}_1 + yS \subseteq
\mathfrak{n} \subseteq \mathfrak{m}S$, then when we contract to $A$, we get
\[\mathfrak{q} + yA \subseteq (\mathfrak{q}_1 + yS) \cap A \subseteq
\mathfrak{n} \cap A \subseteq \mathfrak{m}S \cap A = \mathfrak{m},\]
where the last equality follows from \cite[Proposition 10.15$(iv)$]{AM69}. By
the minimality of $\mathfrak{m}$ over $\mathfrak{q} + yA$, we must have
$\mathfrak{n} \cap A = \mathfrak{m}$, so $\mathfrak{m}S \subseteq \mathfrak{n}$. Since $\mathfrak{m}S$ is maximal, we have $\mathfrak{m}S = \mathfrak{n}$, so $\mathfrak{m}S$ is a minimal prime of $\mathfrak{q}_1 + yS$.

By virtue of the going down theorem for flat extensions \cite[Theorem 9.5]{Mat89}, we conclude that $\height(\mathfrak{m}S) \geq \height(\mathfrak{m}) \geq 2$. This completes the conclusions of the first step, allowing us to replace $A$ with its $y$-adic completion $S$, $x$ with $sx$, and $\mathfrak{q}$ with $\mathfrak{q}_1$.
\end{proof}
We now begin proving some preliminary steps towards Theorems
\ref{thm:pseminormallifts} and \ref{thm:nwnlift}.
\begin{lemma}[cf.\ {\cite[p.\ 441]{Hei08}}]\label{step3}
  In the setting of Setup \ref{settingforthmsaandc} and Lemma \ref{step1}, our
  counterexample $(A,\fm)$ satisfies the following conditions.
  \begin{enumerate}[label=$(\roman*)$,ref=\roman*]
    \item\label{step3btilde} Let $A'$ be the normalization of $A$. Then,
     $\tilde{B} \coloneqq \{t \in A' \mid \mathfrak{q}t \subseteq A\}$ is a subring of $A'$.
    \item\label{step3bmodfin} Every subring $B$ of $\tilde{B}$ such that $A \subseteq B$ is module-finite over $A$, and satisfies $\mathfrak{q}B = \mathfrak{q}$.
  \end{enumerate}
\end{lemma}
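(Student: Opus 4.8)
The plan is to deduce both parts from the single assertion $\fq\tilde{B}\subseteq\fq$, so I would first reduce to that and record two preliminary observations. In each of the three cases of Setup \ref{settingforthmsaandc} the witness $x$ is integral over $A$: for $p$-seminormality because $x^2\in A$, for weak normality because $x^p\in A$, and for normality by hypothesis. Hence $x\in A'$, and since $\fq x=(A:_A x)\,x\subseteq A$ we have $x\in\tilde{B}$. Moreover, writing $x=b/a$ with $a$ a nonzerodivisor of $A$ and $b\in A$, we get $ax=b\in A$, so $a\in\fq$; thus $\fq$ contains a nonzerodivisor. Both facts will be used repeatedly.

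The crux is to prove $\fq x\subseteq\fq$, i.e.\ that $ax\in\fq$ for every $a\in\fq$, and I would do this by localizing at $\fq$. Since $\fq=(A:_A x)$ and $Ax$ is finitely generated, localization gives $(A_\fq:_{A_\fq}x)=\fq A_\fq$, which is the maximal ideal of the local ring $A_\fq$; in particular $x\notin A_\fq$. Now take $\alpha\in\fq A_\fq$, so that $\alpha x\in A_\fq$, and claim $\alpha x\in\fq A_\fq$. If not, $\alpha x$ would be a unit of $A_\fq$, whence $x^{-1}=\alpha(\alpha x)^{-1}\in A_\fq$; feeding $x^{-1}$ into a monic equation for $x$ over $A_\fq$ (multiply $x^n+c_{n-1}x^{n-1}+\cdots+c_0=0$ by $x^{1-n}$) then forces $x\in A_\fq$, a contradiction. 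This is the heart of the argument and the step I expect to be most delicate, since it is exactly where integrality of $x$ enters and where one rules out $\alpha x$ being a unit. Applying it to the image of a given $a\in\fq$ yields $ax\in\fq A_\fq$; as also $ax\in A$, I conclude $ax\in\fq A_\fq\cap A=\fq$, the last equality because $\fq$ is prime.

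Granting $\fq x\subseteq\fq$, the rest is formal. For any $t\in\tilde{B}$ and $a\in\fq$ one has $atx=t(ax)\in t\fq\subseteq A$, since $ax\in\fq$ and $\fq t\subseteq A$; hence $at\in(A:_A x)=\fq$, giving $\fq\tilde{B}\subseteq\fq$. Part $(\ref{step3btilde})$ follows at once: $\tilde{B}$ contains $1$ and is closed under addition, and for $s,t\in\tilde{B}$ we have $st\in A'$ with $\fq(st)=(\fq s)t\subseteq\fq t\subseteq A$, so $st\in\tilde{B}$. For part $(\ref{step3bmodfin})$, fix a nonzerodivisor $a\in\fq$; then $a\tilde{B}\subseteq A$ is an ideal of the Noetherian ring $A$, hence finitely generated, and multiplication by $a$ identifies $\tilde{B}$ with $a\tilde{B}$, so $\tilde{B}$ is module-finite over $A$. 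Any subring $B$ with $A\subseteq B\subseteq\tilde{B}$ is therefore an $A$-submodule of a Noetherian module, hence module-finite over $A$; and $\fq\subseteq\fq B\subseteq\fq\tilde{B}\subseteq\fq$ forces $\fq B=\fq$.
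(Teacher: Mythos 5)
Your proof is correct, and its skeleton agrees with the paper's --- both arguments come down to the single containment $\fq\tilde{B}\subseteq\fq$, after which closure of $\tilde{B}$ under the ring operations, the equality $\fq B=\fq$, and module-finiteness via multiplication by a nonzerodivisor of $\fq$ are essentially formal --- but you establish that containment by a genuinely different mechanism. The paper quotes a theorem of McQuillan \cite[Remark 3]{McQ79}: since $\fq$ is radical and $A'$ is integral over $A$, one has $\fq A'\cap A=\fq$, and hence $\fq B=\fq B\cap A\subseteq\fq A'\cap A=\fq$, where $\fq B\subseteq A$ holds by the definition of $\tilde{B}$. You instead prove the seed containment $\fq x\subseteq\fq$ by hand: localizing at $\fq$ gives $(A_\fq:_{A_\fq}x)=\fq A_\fq$, so $x\notin A_\fq$, and if $\alpha x$ were a unit of $A_\fq$ for some $\alpha\in\fq A_\fq$ then $x^{-1}=\alpha(\alpha x)^{-1}\in A_\fq$ and the monic equation for $x$ would force $x\in A_\fq$, a contradiction; primality of $\fq$ then gives $ax\in\fq A_\fq\cap A=\fq$, and the identity $\fq=(A:_A x)$ bootstraps this to $\fq\tilde{B}\subseteq\fq$. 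The trade-off: McQuillan's result is a lying-over statement valid for any radical ideal in any integral extension, so the paper's proof is shorter given the reference and controls the whole normalization at once; your argument is self-contained and elementary, but leans on the specific features of the situation --- that $\fq$ is prime, that it has the conductor form $(A:_A x)$, and that $x$ is integral over $A$, which you rightly verify in all three cases of Setup \ref{settingforthmsaandc} (this is also exactly where integrality enters the paper's proof, hidden inside lying over). A small bonus of your route: module-finiteness of $\tilde{B}$, obtained from the ideal $a\tilde{B}\cong\tilde{B}$ of $A$, needs only the definition of $\tilde{B}$ and not $\fq B=\fq$, whereas the paper embeds $B\cong qB\subseteq\fq B=\fq$; both versions are fine.
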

\begin{proof}

Since $\mathfrak{q} \cdot 1_{A'} = \mathfrak{q} \subseteq A$, we have $1_{A'} \in B$. So to show that $\tilde{B}$ is a subring of $A'$ it now suffices to show that $\tilde{B}$ is stable under multiplication and subtraction. To see this, let $t,t'\in \tilde{B}$ and $q\in \mathfrak{q}$. Then we have $qtt' = (qt)t'\in (\mathfrak{q} \tilde{B})\tilde{B}= \mathfrak{q} \tilde{B}\subseteq A$ and $q(t-t')=qt-qt'\in A$. This proves $(\ref{step3btilde})$. 

For $(\ref{step3bmodfin})$,
let $B$ be a subring of $\tilde{B}$ that contains $A$. We first show $\mathfrak{q}B = \mathfrak{q}$. Since $B$ contains $A$ and is a ring, we have $1_{A'} \in B$ and so $\mathfrak{q} \subseteq \mathfrak{q}B$. Conversely, we have $\mathfrak{q} B =  \mathfrak{q} B \cap A \subseteq \mathfrak{q} A' \cap A$; since $\mathfrak{q}$ is radical and $A'$ is integral over $A$, this implies by \cite[Remark 3]{McQ79} that $\mathfrak{q} A' \cap A = \mathfrak{q}$, so $\mathfrak{q}B \subseteq \mathfrak{q}$.

It remains to show that $B$ is module-finite over $A$. Since $x \in Q(A)$ is nonzero, there exists a nonzerodivisor $q \in A$ such that $qx \in A$, in which case $q \in \mathfrak{q}$ by Lemma \ref{step1}. Consider the composition of $A$-module homomorphisms
\[
  \begin{tikzcd}[column sep=scriptsize]
    B \arrow[r, "q\cdot -", "\sim"'] & qB \arrow[r, hook] &[-0.425em] \mathfrak{q}B=\mathfrak{q}
\end{tikzcd}
\]
where the first map is an isomorphism since $q$ is a nonzerodivisor. This composition realizes $B$ as an $A$-submodule of the finitely generated $A$-module $\mathfrak{q}$. Since $A$ is Noetherian, $B$ is module-finite over $A$. 
\end{proof}

\begin{lemma}[cf.\ {\cite[Lemma 3]{Hei08}}]\label{step4}
  Let $(A,\fm)$ be our counterexample as in
  Setup \ref{settingforthmsaandc} and Lemma \ref{step1},
  let $\tilde{B}$ be as in Lemma \ref{step3}$(\ref{step3btilde})$, and let $K = Q(A/yA)$.
  Let $B$ be one of the following subrings of $\tilde{B}$ containing $A$.
  \begin{enumerate}[label=$(\roman*)$,ref=\roman*]
    \item\label{step4:npsn}
      $B = \tilde{B}$.
    \item\label{step4:wn}
      $B = \Set{t \in \tilde{B} \given t^p \in A}$ (if $\mathcal{P} =$
      ``weakly normal'').
  \end{enumerate}
  Then, there exists a commutative diagram
\[
\begin{tikzcd}
A/yA \arrow[r, hook]                   & B/yB \arrow[r, hook]         & K \\
A \arrow[r, hook] \arrow[u, two heads] & B \arrow[u, two heads] &  
\end{tikzcd}\]
where the horizontal arrows are injective.
\end{lemma}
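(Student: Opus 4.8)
The plan is to construct the subring $B$ explicitly in each case, verify it lies between $A$ and $\tilde{B}$, and then produce the commutative diagram by analyzing how reduction modulo $y$ interacts with the total quotient ring $K = Q(A/yA)$. First I would handle the containments defining $B$. In case $(\ref{step4:npsn})$ there is nothing to check since $B = \tilde{B} \supseteq A$ by Lemma \ref{step3}$(\ref{step3btilde})$ (note $A \subseteq \tilde{B}$ because $\mathfrak{q}\cdot a \subseteq A$ for $a \in A$). In case $(\ref{step4:wn})$, I would check that $B = \{t \in \tilde{B} \mid t^p \in A\}$ is a subring containing $A$: it contains $A$ since $a^p \in A$ for $a \in A$, and it is closed under the ring operations because in the weakly normal setting $A$ is seminormal, so one can use the additivity of $p$-th powers modulo lower-order subintegral corrections — more carefully, I expect to invoke that $\tilde{B}$ is (weakly) subintegral over $A$ so that residue field extensions are purely inseparable of characteristic $p$, forcing $(t+t')^p \equiv t^p + t'^p$ to stay in $A$ after accounting for the relevant binomial terms. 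This closure verification is the one genuinely fiddly algebraic point, and I would lean on Lemma \ref{step3}$(\ref{step3bmodfin})$ (module-finiteness and $\mathfrak{q}B = \mathfrak{q}$) to keep everything inside a manageable finite extension.

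Next I would build the square. The left vertical map $A \twoheadrightarrow A/yA$ is the canonical surjection, and the right vertical $B \twoheadrightarrow B/yB$ likewise; the bottom horizontal $A \hookrightarrow B$ is the given inclusion, which is injective by construction. Commutativity of the square is immediate from functoriality of reduction mod $y$. The content is therefore in the top row: I must produce injections $A/yA \hookrightarrow B/yB \hookrightarrow K$. For the first, since $B$ is module-finite over $A$ (Lemma \ref{step3}$(\ref{step3bmodfin})$) and $y$ is a nonzerodivisor on $A$, the key is that $y$ remains a nonzerodivisor on $B$ so that $yB \cap A = yA$; this gives the injectivity $A/yA \hookrightarrow B/yB$. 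That $y$ is a nonzerodivisor on $B$ follows because $B \subseteq A' \subseteq Q(A)$ and $y$ is a nonzerodivisor in $Q(A)$, being a nonzerodivisor in $A$.

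The hard part will be the injection $B/yB \hookrightarrow K = Q(A/yA)$. The natural approach is to use $\mathfrak{q}B = \mathfrak{q}$ from Lemma \ref{step3}$(\ref{step3bmodfin})$ together with the fact that $\mathfrak{m}$ is minimal over $\mathfrak{q} + yA$ (Lemma \ref{step1}). I would argue that $B/yB$ is reduced — or more precisely that its image in $K$ is controlled — by examining minimal primes. Concretely, I expect to show that the minimal primes of $B$ lying over minimal primes of $A$ restrict well modulo $y$, so that $B/yB$ embeds into the product of the residue fields at the generic points of $A/yA$, which is exactly $K$ when $A/yA$ is reduced (it is, since it satisfies $\mathcal{P}$ and hence is reduced by the remark in Setup \ref{settingforthmsaandc}). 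The map $B/yB \to K$ is induced by $B \hookrightarrow A' = Q(A) \cap A' $ followed by reduction; the injectivity amounts to showing no nonzero element of $B/yB$ maps to zero, which I would reduce to the statement that the only elements of $B$ whose reduction is a zerodivisor-or-nilpotent in $K$ already lie in $yB$. I anticipate that verifying this embedding cleanly — ensuring the composite $B \to B/yB \to K$ is well-defined and has the asserted kernel — is the crux, and that it will rest on combining the seminormality/normality of $A/yA$ with the height condition $\height(\mathfrak{m}) \geq 2$ to rule out spurious associated primes of $B/yB$.
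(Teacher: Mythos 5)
There is a genuine gap --- in fact two. The central missing idea is the construction of the map $B/yB \to K$ itself. You write that it is ``induced by $B \hookrightarrow A'$ followed by reduction,'' but reduction modulo $y$ does not act on $A'$ or $Q(A)$: elements of $B$ are fractions whose denominators can become zerodivisors (or zero) in $A/yA$, so there is no naive reduction map. The paper's key device is to use prime avoidance to produce an element $c \in \mathfrak{q}$ that is a nonzerodivisor in $A$ \emph{and} whose image $\bar c$ is a nonzerodivisor in $A/yA$ (this uses exactly the two facts you cite --- $\mathfrak{q}$ contains the nonzerodivisor denominator of $x$, and $\mathfrak{m}$ is minimal over $\mathfrak{q}+yA$ with $\height(\mathfrak{m}) \ge 2$, so $\mathfrak{q}$ avoids all minimal primes of $0$ and of $yA$). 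The map is then $b \mapsto \overline{bc}/\bar c$, which lands in $K$ because $bc \in \mathfrak{q}B = \mathfrak{q} \subseteq A$; and its injectivity reduces to showing $\bar c$ is a nonzerodivisor on $B/yB$, i.e.\ that $cd \in yB$ forces $d \in yB$. That last step is the real technical crux and is absent from your sketch: one shows $cd^n \in y^nA$ for all $n$ by induction (using that $y^n, c$ is a regular sequence), deduces $d/y \in A'$ by Gilmer's criterion for integrality, and then checks $d/y \in \tilde B$ (resp.\ $(d/y)^p \in A$ in the weakly normal case). Your alternative plan --- proving $B/yB$ is reduced and embeds in a product of residue fields by analyzing minimal primes --- has no mechanism behind it and would still need a map to analyze.

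Second, your argument for the injectivity of $A/yA \to B/yB$ rests on a false implication: ``$y$ remains a nonzerodivisor on $B$, so $yB \cap A = yA$.'' This fails in general; e.g.\ for $A = k\llbracket t^3,t^4,t^5 \rrbracket \subseteq B = k\llbracket t \rrbracket$ and $y = t^3$, the element $t^4$ lies in $yB \cap A$ but not in $yA$, even though $y$ is a nonzerodivisor on $B$. In the paper the order of logic is reversed: one first constructs $B/yB \to K$, observes that the composite $A/yA \to B/yB \to K$ is the canonical inclusion into the total quotient ring (hence injective), and only then concludes that $A/yA \to B/yB$ is injective and that $yB \cap A = yA$. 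A smaller but real defect: in case $(\ref{step4:wn})$ your closure-under-subtraction argument appeals to purely inseparable residue field extensions, but there is no characteristic-$p$ hypothesis anywhere; the correct argument writes $(s-t)^p = s^p - t^p + pb$ with $b \in \tilde B$ and uses $p \in \mathfrak{q}$ (which holds because $px \in A$) together with $\mathfrak{q}\tilde B = \mathfrak{q} \subseteq A$ to absorb the cross terms.
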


\begin{proof}
We first note that $B = \tilde{B}$ is a subring of $A'$ in
case $(\ref{step4:npsn})$ by Lemma \ref{step3}$(\ref{step3btilde})$.
We also show that in case $(\ref{step4:wn})$, the set $B$ is a subring of
$\tilde{B}$ containing $A$.
Since $A \subseteq B$, we have $1 \in B$.  Hence, it suffices to show that $B$
is stable under multiplication and subtraction. If $s, t \in B$, then 
$(st)^p = s^p t^p \in A$, and hence $B$ is stable under multiplication.
To show that $B$ is stable under subtraction, we first note that $(s - t)^p
= s^p - t^p + p b$ for some $b \in \tilde{B}$. Since $p \in \mathfrak{q}$ (by
the fact that $px \in A$ in Setup \ref{settingforthmsaandc}), we have $pb \in
\fq B = \fq \subseteq A$, so $(s - t)^p \in A$, and hence $s - t \in B$.
Thus, $B$ is stable under subtraction.

Next, we see that
the commutativity of the square follows from basic properties of quotient rings.
Thus, we need to show that the inclusion $A/yA \hookrightarrow K$ factors through $B/yB$, and that the maps on the top row are injective.

We will first construct a nonzerodivisor $c \in \mathfrak{q}$ whose image $\bar c \in A/yA$ is also a nonzerodivisor. If $\mathfrak{q}$ is contained in a minimal prime $\mathfrak{p}$ of $yA$, then $\mathfrak{q} \subseteq \mathfrak{q} + yA \subseteq \mathfrak{p}$.
Thus, by Lemma \ref{step1}, we have $\mathfrak{m} = \mathfrak{p}$, which implies that $\mathfrak{m}$ has height $1$, contradicting Lemma \ref{step1}. By \cite[Lemma 3.5]{Swa80},
the set of zero divisors of a reduced ring is the union of its minimal primes. As in the proof of Lemma \ref{step3}$(\ref{step3bmodfin})$, the denominator of $x \in Q(A)$ is a nonzerodivisor in $\mathfrak{q}$, so $\mathfrak{q}$ is not a minimal prime of $0 \subseteq A$. Let $\mathfrak{p}_1,\mathfrak{p}_2,  \ldots, \mathfrak{p}_n$ and $\mathfrak{p}'_1, \mathfrak{p}'_2,\ldots, \mathfrak{p}'_m$ be the minimal primes of $0$ and $yA$ respectively. We have $\mathfrak{q} \not\subseteq\mathfrak{p}_i$ and $\mathfrak{q} \not\subseteq\mathfrak{p}'_i$ for each $i$. Therefore by prime avoidance \cite[Proposition 1.11]{AM69}, we see that
\[\mathfrak{q} \not \subseteq \left(\bigcup_i \mathfrak{p}_i\right) \cup \left(\bigcup_i \mathfrak{p}'_i\right).\]
Take $c$ to be a witness to this noninclusion. Then $c$ is in none of the minimal primes in either $A$ or $A/yA$, so neither $c$ nor $\bar c$ is a zero divisor.

Define a map $B \rightarrow K$ by $b \mapsto (\overline{bc})/\bar c$. This is well defined since $bc \in \mathfrak{q}B = \mathfrak{q} \subseteq A$ (by Lemma \ref{step3}$(\ref{step3bmodfin})$) and since $\bar c$ is a nonzerodivisor on $A/yA$ by assumption. Under this map, note that $y \mapsto (\bar y \bar c)/\bar c = \bar y  = 0$, so it induces a map $B/yB \rightarrow K$. Note that the composition $A/yA \rightarrow B/yB \rightarrow K$ is induced by the map $A \rightarrow K$ given by $r \mapsto (\bar r \bar c)/\bar c = \bar r$, so it agrees with the inclusion $A/yA \hookrightarrow K$, and hence we have the existence of the top row of the diagram.

It remains to show that the homomorphisms $A/yA \to B/yB$ and $B/yB \to K$ are injective. Since the composition $A/yA \hookrightarrow K$ is injective, the homomorphism $A/yA \to B/yB$ is injective. So we have $yB \cap A = yA$. 

Now we show that $B/yB \to K$ is also injective. Pick any $b \in B$ such that its image $\overline{b}$ in $B/yB$ lies in $\ker(B/yB \to K)$. We want to show that $\overline{b} = 0$, or equivalently, $b \in yB$. Let $\overline{c}$ be the image of $c$ in $B/yB$. Then $\overline{c}\overline{b} \in \ker(B/yB \to K)$. Since $c \in \mathfrak{q}$ and $b \in B$, we have $cb \in A$. So we can lift $\overline{c}\overline{b}$ to $cb \in A$, and obtain $cb \in \ker(A \to K) = yA \subseteq yB$. This implies $\overline{c}\overline{b} = 0 \in B/yB$. Since $A/yA \to B/yB$ is injective and $c \not\in yA$, we know $\overline{c} \neq 0$. To show that $\overline{b} = 0$, it suffices to show that $\overline{c}$ is a nonzerodivisor on $B/yB$. This is equivalent to showing that if there is some $d \in B$ such that $cd \in yB$, then $d \in yB$. 

First we show that $cd^n \in y^n A$ for every integer $n > 0$. We proceed by induction on $n$. Consider the $n=1$ case. Since $c \in \mathfrak{q}$ and $d \in B$, we have $cd \in A$ by definition of $B$. Since $cd \in yB$ by our choice of $d$ and $yB \cap A = yA$, we have $cd \in yA$. Now suppose $n > 1$. By inductive hypothesis, we have
\[c \cdot cd^n = c^2d^n = cd \cdot cd^{n-1} \in (y \cdot y^{n-1})A = y^nA.\]
Because $B$ is a ring, we have $cd^n \in A$ by definition of $B$.
Since $y,c$ is a regular sequence on $A$ by construction of $c$, the sequence $y^n,c$ is also a regular sequence on $A$ by
\cite[Theorem 16.1]{Mat89}, and hence $c$ is a nonzerodivisor on $A/y^nA$. Thus,
we have $cd^n \in y^n A$.
We therefore see that $c(d/y)^n \in A$ for all integers $n > 0$, which shows 
$d/y \in A'$ by \cite[p.\ 133 and Theorem 13.1(3)]{Gil92}.

We now show that $d/y \in \tilde{B}$.
By the fact that $cd \in yA$, we have $c\mathfrak{q}d \subseteq yA$.
Since $\mathfrak{q}d \subseteq A$ by definition of $\tilde{B}$ and $c$ is a
nonzerodivisor on $A/yA$, we know that $\mathfrak{q}d \subseteq yA$. Thus,
$\mathfrak{q} \cdot (d/y) \subseteq A$. By definition of $\tilde{B}$, we have
$d/y \in \tilde{B}$, and therefore $d \in y\tilde{B}$. This shows that
$\overline{c}$ is a nonzerodivisor on $\tilde{B}/y\tilde{B}$, concluding the
proof in case $(\ref{step4:npsn})$.

It remains to consider case $(\ref{step4:wn})$.
We need to show that $d/y \in B$.
By the argument above, we have $cd^p \in y^pA$.
As before, we know that $y^p,c$ is a regular sequence on $A$ by \cite[Theorem
16.1]{Mat89}, and hence $d^p \in y^pA$, which shows 
$(d/y)^p \in A$.
By definition of $B$, we have $d/y \in B$, and therefore $d \in yB$.
This shows that $\overline{c}$ is a nonzerodivisor on $B/yB$, concluding the
proof in case $(\ref{step4:wn})$.
\end{proof}

\section{\texorpdfstring{$p$}{p}-seminormality
deforms}\label{section:pseminormalitylifts}
In this section, we show that $p$-seminormality deforms.
\begin{customthm}{A}\label{thm:pseminormallifts}
  Let $(A,\fm)$ be a Noetherian local ring, and let $y \in \fm$ be a
  nonzerodivisor.
  \begin{enumerate}[label=$(\roman*)$,ref=\roman*]
    \item\label{thm:pseminormalliftsfixedp}
      Let $p$ be an integer.
      If $A/yA$ is $p$-seminormal, then $A$ is $p$-seminormal.
    \item\label{thm:steadfastlifts}
      If $A/yA$ is reduced and steadfast, then $A$ is reduced and steadfast.
  \end{enumerate}
\end{customthm}
By Asanuma and Hamann's characterization of steadfastness (Theorem
\ref{thm:steadfastiff}), it suffices to show
$(\ref{thm:pseminormalliftsfixedp})$.
We proceed by contradiction. To do so, we use the setup from Setup
\ref{settingforthmsaandc} and
the preliminary steps from \S\ref{section:preliminary-steps}, adapting the
proof of \cite{Hei08} for 0-seminormality to work for
$p$-seminormality.\medskip
\par We start with the following two lemmas.

\begin{lemma}[{cf.\ \cite[Lemma 4 and Remark on p.\ 442]{Hei08}}]\label{step5}
  Let $\mathcal{P} =$ ``$p$-seminormal'', and let $(A,\fm)$ be our counterexample
  as in 
  Setup \ref{settingforthmsaandc} and Lemma \ref{step1}.
  Let $B = \tilde{B}$ as in Lemma \ref{step3}$(\ref{step3btilde})$.
If $b \in B$ is such that $b^m, b^{m + 1} \in A + yB$ for some integer $m > 0$ and $p b \in A+yB$, then $b \in A + yB$. Also, we have $\mathfrak{m} B = \mathfrak{m} + y B = J(B)$, where $J(B)$ is the Jacobson radical of $B$,
and $B/\mathfrak{m}B$ is a direct product of fields.
\end{lemma}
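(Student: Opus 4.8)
The plan is to prove the three assertions of Lemma~\ref{step5} in sequence, leaning on the structure set up in Lemmas~\ref{step1}--\ref{step4}. For the first assertion, I would argue modulo $yB$. Since $B/yB$ injects into $K = Q(A/yA)$ by Lemma~\ref{step4}$(\ref{step4:npsn})$, and $A/yA$ is $p$-seminormal by hypothesis, the idea is to reduce the statement to the $p$-seminormality of $A/yA$ inside $B/yB$. Concretely, let $\bar b$ denote the image of $b$ in $B/yB$. The hypotheses $b^m, b^{m+1}, pb \in A + yB$ say precisely that $\bar b^m, \bar b^{m+1}, p\bar b$ all lie in the image of $A/yA$ in $B/yB$. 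Since $A/yA$ is $p$-seminormal, $Q(A/yA)$ is a product of fields and $B/yB$ is integral over $A/yA$ (as $B$ is module-finite over $A$ by Lemma~\ref{step3}$(\ref{step3bmodfin})$), I can invoke Lemma~\ref{lem:swa34} together with Proposition~\ref{prop:hamannforpseminormal} to conclude $\bar b \in A/yA$, i.e.\ $b \in A + yB$.

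For the claim that $\mathfrak{m}B = \mathfrak{m} + yB = J(B)$, I would first note $y \in \mathfrak{m}$ gives $\mathfrak{m} + yB = \mathfrak{m}B$ immediately. The heart is identifying this with the Jacobson radical. Since $B$ is module-finite over the Noetherian local ring $(A,\mathfrak{m})$, every maximal ideal of $B$ lies over $\mathfrak{m}$ by lying-over and incomparability for integral extensions, so $J(B) = \bigcap \mathfrak{n}$ is the set of elements in every maximal ideal, each of which contracts to $\mathfrak{m}$. I would show $\mathfrak{m}B \subseteq J(B)$ (each $\mathfrak{m}B \subseteq \mathfrak{n}$ since $\mathfrak{n} \cap A = \mathfrak{m}$) and conversely that $B/\mathfrak{m}B$ is a finite-dimensional algebra over the field $A/\mathfrak{m}$ whose Jacobson radical vanishes, which forces $J(B) \subseteq \mathfrak{m}B$.

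For the final assertion, that $B/\mathfrak{m}B$ is a direct product of fields, the plan is to show $B/\mathfrak{m}B$ is a finite reduced algebra over the field $k = A/\mathfrak{m}$, since any such algebra is a finite product of fields by the structure theory of Artinian rings (a finite reduced algebra over a field is Artinian with zero Jacobson radical, hence a product of fields). Finiteness is immediate from Lemma~\ref{step3}$(\ref{step3bmodfin})$. The key point is reducedness of $B/\mathfrak{m}B = B/(\mathfrak{m}+yB)$. \textbf{This is where I expect the main obstacle to lie:} proving that $\mathfrak{m}B$ is radical in $B$. Here I would use the first assertion of the lemma crucially: I expect reducedness of $B/\mathfrak{m}B$ to follow from the $p$-seminormality-type cancellation statement just proved, applied modulo $yB$, combined with the fact that $A/yA$ is itself $p$-seminormal (hence reduced) and that $B/yB$ embeds in the product of fields $K$. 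Specifically, a nilpotent $\bar b \in B/\mathfrak{m}B$ would pull back to some $b$ with high powers in $\mathfrak{m}+yB = \mathfrak{m}B$, and I would feed this into the seminormality criterion to descend $b$ into $A + yB$ and then into $\mathfrak{m}+yB$, contradicting nonnilpotence unless $\bar b = 0$. Handling the prime $p$ and the element $pb$ carefully in this descent — ensuring the hypotheses of Proposition~\ref{prop:hamannforpseminormal} are met in the quotient $B/yB$ — will require the most care.
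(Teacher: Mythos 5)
Your treatment of the first assertion is correct and matches the paper's proof exactly: reduce modulo $yB$, use the embedding $B/yB \hookrightarrow K = Q(A/yA)$ from Lemma \ref{step4}$(\ref{step4:npsn})$, and apply the $p$-seminormality of $A/yA$ via Lemma \ref{lem:swa34} and Proposition \ref{prop:hamannforpseminormal}. The rest of the proposal, however, has a genuine gap. You assert that ``$y \in \fm$ gives $\fm + yB = \fm B$ immediately.'' Only the inclusion $\fm + yB \subseteq \fm B$ is immediate; the reverse inclusion $\fm B \subseteq \fm + yB$ is the substantive content of the lemma, since it says every element of $\fm B$ is congruent modulo $yB$ to an element of $A$, and nothing in the setup gives this for free (in general $\fm B \not\subseteq A + yB$: only the smaller ideal $\fq$ satisfies $\fq B = \fq \subseteq A$). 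Worse, your plan for proving radicality of $\fm B$ leans on this unproven equality: you propose to take $b$ with $b^N \in \fm B$, rewrite this as $b^N \in \fm + yB \subseteq A + yB$, and feed it into the first assertion. That is circular --- the equality $\fm B = \fm + yB$ and the radicality of $\fm B$ are precisely the two statements that must be proved together, and neither is available when your descent begins.

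The missing ingredients are the ones the paper uses to start the descent without assuming the equality. By Lemma \ref{step1}, $\fm$ is minimal over $\fq + yA$, and since $A$ is local every prime containing $\fq + yA$ equals $\fm$, so $\sqrt{\fq + yA} = \fm$ and hence $\fm^k \subseteq \fq + yA$ for some $k > 0$. Combining this with $\fq B = \fq$ from Lemma \ref{step3}$(\ref{step3bmodfin})$, any $c \in \sqrt{\fm B}$ has high powers satisfying $c^m, c^{m+1} \in \fm^k B \subseteq (\fq + yA)B = \fq + yB \subseteq A + yB$. Moreover, since $px \in A$ we have $p \in (A :_A x) = \fq$, so $pc \in \fq B = \fq \subseteq A + yB$; this settles the issue you flagged about handling $pb$, and it is exactly why the element $x$ witnessing the failure of $p$-seminormality (rather than an arbitrary one) matters here. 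Now the first assertion gives $c \in A + yB$, say $c = a + yr$; then $a^N = (c - yr)^N \in \fm B \cap A = \fm$ (properness of $\fm B$ holds by Nakayama since $B$ is module-finite), so $a \in \fm$ and $c \in \fm + yB$. This single computation proves $\sqrt{\fm B} \subseteq \fm + yB$, which establishes the equality $\fm B = \fm + yB$ and the radicality of $\fm B$ simultaneously. Once that is in place, your route to $J(B) = \fm B$ and to the product-of-fields statement (maximal ideals of $B$ all contract to $\fm$ by integrality, and a reduced finite-dimensional $A/\fm$-algebra is a finite product of fields) is sound and agrees in substance with the paper's primary-decomposition argument.
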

\begin{proof}
We begin with the first statement. Let $\bar{b}$ be the image of $b \in B / y B$. By the previous lemma, we can consider $\bar{b}$ as an element in $K = Q(A / yA)$. Then we have $\bar{b}^m, \bar{b}^{m + 1} \in (A + yB) / yB = A / yA$ and similarly $p\bar{b} \in A / yA$. Since $A / yA$ is $p$-seminormal (here we are using the equivalent definition of $p$-seminormality given in Proposition \ref{prop:hamannforpseminormal}), it follows that $b \in A + yB$.

We now show the second statement. First recall that $y\in \mathfrak{m}$ and so $yB\subseteq \mathfrak{m}B$. Since also $\mathfrak{m}\subseteq\mathfrak{m}B$, we then have $\mathfrak{m}+yB\subseteq \mathfrak{m}B$. Conversely, we claim that $\sqrt{\mathfrak{m}B} \subseteq \mathfrak{m} + yB$; this will show that $\mathfrak{m}B =\mathfrak{m} + yB = \sqrt{\mathfrak{m}B}$. We know that $\sqrt{\mathfrak{q}+yA}=\mathfrak{m}$ by Lemma \ref{step1}, and hence $\mathfrak{m}^k\subseteq\mathfrak{q}+yA$ for some $k>0$. Then for every $c \in \sqrt{\mathfrak{m}B}$, we have $c^m \in \mathfrak{m}^kB$ for some $m \geq k$, and hence
\begin{align*}
    c^m,c^{m+1}\in(\mathfrak{q}+yA)B = \mathfrak{q}B+yAB = \mathfrak{q}+yB \subseteq A+yB,
\end{align*}
where the second equality follows from Lemma \ref{step3}$(\ref{step3bmodfin})$.
Moreover, since $px \in A$, we have $p \in (A :_A x) = \fq$, and hence
$pc \in \fq B = \fq \subseteq A+yB$ by Lemma \ref{step3}$(\ref{step3bmodfin})$.
By the first statement shown above, we know that $c \in A + yB$, so we have $c = a + yr$ for some $a \in A$, and $r \in B$. Since $c, yr \in \mathfrak{m}B$, we find $a \in \mathfrak{m}B \cap A = \mathfrak{m}$, giving us $c \in \mathfrak{m} + yB$ as needed. 

Now consider an irredundant primary decomposition \cite[Theorem 7.13]{AM69} of $\mathfrak{m}B$:
\begin{align*}
    \mathfrak{m}B=\mathfrak{p}_1\cap \mathfrak{p}_2\cap\cdots \cap \mathfrak{p}_n.
\end{align*}
Since $\mathfrak{m}B$ is radical, all the $\mathfrak{p}_i$'s are prime. Furthermore, for each $i$ we have $\mathfrak{m}=\mathfrak{m}B\cap A\subseteq \mathfrak{p}_i\cap A$, so by \cite[Corollary 5.8]{AM69} we see that $\mathfrak{p}_i$ is maximal in $B$.
 Also since all maximal ideals of $B$ contract to $\mathfrak{m}$ we see that $\mathfrak{m}B$ is contained in all maximal ideals of $B$. Hence $\mathfrak{m}B$ is exactly the intersection of all maximal ideals of $B$, i.e.\ $\mathfrak{m}B=J(B)$.

Now that we have shown that $\mathfrak{m}B$ is the Jacobson radical, we have
\[\mathfrak{m}B = J(B) = \bigcap_{i=1}^l \mathfrak{n}_i \]
for the maximal ideals
$\mathfrak{n}_1,\mathfrak{n}_2,\ldots,\mathfrak{n}_l$ of $B$, of which there are
finitely many by Lemma \ref{step3}$(\ref{step3bmodfin})$.
Since the $\mathfrak{n}_i$ are comaximal, it follows from \cite[Proposition
1.10]{AM69} that
\[\frac{B}{\mathfrak{m}B} \cong \prod_{i = 1}^{l}
\frac{B}{\mathfrak{n}_{i}}.\qedhere\]
\end{proof}

\begin{lemma}[{cf.\ \cite[Lemma 5]{Hei08}}]\label{step6}
  Let $\mathcal{P} =$ ``$p$-seminormal'', and let $(A,\fm)$ be our counterexample
  as in 
  Setup \ref{settingforthmsaandc} and Lemma \ref{step1}.
  Let $B = \tilde{B}$ as in Lemma \ref{step3}$(\ref{step3btilde})$.
Suppose that $u, s \in B$, $\delta \in A$, and $xs \in A$ satsfiy a relation
\[xu = \delta + y^e s\]
for some $e > 0$.
Then, $s \in \mathfrak{m}B$.
\end{lemma}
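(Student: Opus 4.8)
The plan is to mirror the structure of \cite[Lemma 5]{Hei08}, inserting the extra $p$-multiplication conditions so that Proposition \ref{prop:hamannforpseminormal} (as packaged in Lemma \ref{step5}) can be applied in place of Hamann's criterion for ordinary seminormality. Throughout I work inside the fixed counterexample $(A,\fm)$ with $B = \tilde{B}$, writing $\bar{(-)}$ for images in $B/yB$ viewed inside $K = Q(A/yA)$ via Lemma \ref{step4}.

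First I would record several consequences of the setup. Since $x^2\in A$, the element $x$ is integral over $A$, and $\fq x = (A:_A x)x\subseteq A$ puts $x\in\tilde{B}=B$. From $x^2,x^3\in A$ we get $x^j\in A$ with $x^j\cdot x = x^{j+1}\in A$ for all $j\ge 2$, so $x^j\in(A:_A x)=\fq$ for every $j\ge 2$; in particular $x^2\in\fq\subseteq\fm\subseteq\fm B$, and since $\fm B = J(B)$ is radical by Lemma \ref{step5} we conclude $x\in\fm B$. That lemma also gives $\fm B = \fm + yB$, and because $\fm = \sqrt{\fq+yA}$ by Lemma \ref{step1} one checks $\fm B = \sqrt{\fq+yB}$; hence it suffices to prove $s^k\in\fq+yB$ for some $k$. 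Reducing $xu=\delta+y^e s$ and $xs\in A$ modulo $\fm B$ (using $x,y\in\fm B$) additionally yields $\delta\in\fm$ and $xs\in\fm$.

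Next I would pass modulo $yB$. As $e>0$, the relation becomes $\bar{x}\,\bar{u}=\bar{\delta}$ in $K$. Since $A/yA$ is $p$-seminormal (hence reduced and Noetherian), $K$ is a finite product of fields and so is $p$-seminormal; Lemma \ref{lem:swa34} then shows $A/yA$ is $p$-seminormal \emph{in} $K$. Applying this to $\bar{x}$, whose square, cube, and $p$-multiple are the images of $x^2,x^3,px\in A$, gives $\bar{x}\in A/yA$, that is, $x\in A+yB$. The heart of the argument is then to leverage these facts to force $s$ into $\fm B$. Here I would exploit that $(xu)^j = x^j u^j\in\fq B = \fq\subseteq A$ for $j\ge 2$ (Lemma \ref{step3}$(\ref{step3bmodfin})$), expanding $(y^e s)^k=(xu-\delta)^k$ by the binomial theorem and resubstituting $xu=\delta+y^e s$; after cancellation this produces relations of the shape $y^{e(k-1)}s^k + k\delta^{k-1}s\in A$ for all $k\ge 2$. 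Together with $ps\in\fq B=\fq\subseteq A$, these are exactly the ingredients one wants to feed into the $p$-seminormal lifting criterion of Lemma \ref{step5} to promote powers of $s$ into $A+yB$.

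I expect the final descent to be the main obstacle. The difficulty is that every term carrying a positive power of $y$ already lies in $\fm B$ automatically, so reducing the relations formally modulo $\fm B$ (or modulo $\fq+yB$) collapses all information about $s$; one must instead isolate the ``$y$-free'' part of $s$, equivalently rule out that the image $\bar{s}\in A/yA$ is a unit — for if $\bar s$ is a unit then $s\in B^\times$ and $s\notin\fm B$. Closing this gap is where the radicality of $\fm B = J(B) = \sqrt{\fq+yB}$ and the $p$-seminormality of $A/yA$ in $K$ must be combined carefully with the module-finiteness of $B$ over $A$ from Lemma \ref{step3}$(\ref{step3bmodfin})$, rather than applied to the relation termwise.
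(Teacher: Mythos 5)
Your proposal has a genuine gap, and you flag it yourself in the final paragraph: you do not know how to close the ``final descent,'' and indeed it cannot be closed from the relations you derive. The single missing idea is the paper's opening move: multiply the relation $xu = \delta + y^e s$ through by $x$. Since $x^2 \cdot x = x^3 \in A$, we have $x^2 \in \fq = (A :_A x)$, so $x^2 u \in \fq B = \fq \subseteq A$ by Lemma \ref{step3}$(\ref{step3bmodfin})$, while $xy^e s = y^e(xs) \in A$ by hypothesis; hence $\delta x \in A$, i.e., $\delta \in \fq$. This is strictly stronger than the $\delta \in \fm$ you extract by reducing modulo $\fm B$, and it is exactly what kills the term that blocks you: with $\delta \in \fq$, every term of
\[
  (y^e s)^k = (xu - \delta)^k = \sum_{i=0}^{k} \binom{k}{i} (xu)^i (-\delta)^{k-i}
\]
lies in $\fq$ --- for $i < k$ because $\delta^{k-i} \in \fq$ and $(xu)^i \in B$, so $(xu)^i\delta^{k-i} \in \fq B = \fq$, and for $i = k$ because $(xu)^k = x^2 \cdot (x^{k-2}u^k) \in \fq B = \fq$ (here $x \in B$, as you noted). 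Thus $y^{ek}s^k \in \fq$ with no residual linear term, and no appeal to the $p$-seminormality of $A/yA$ or to the lifting criterion of Lemma \ref{step5} is needed at this stage.

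By contrast, your resubstitution of $xu = \delta + y^e s$ into the $i=1$ term leaves the contaminated relations $y^{e(k-1)}s^k \pm k\delta^{k-1}s \in A$, which are too weak: modulo $yB$ they say only that $k\delta^{k-1}s \in A + yB$, which carries essentially no information about $s$ (it is automatic once $\delta \in \fq$), and they can never be fed into Lemma \ref{step5}, whose hypothesis requires two consecutive powers of $s$ (together with $ps$) to lie in $A + yB$. Your proposed fallback --- ruling out that $\bar{s}$ is a unit --- also aims at the wrong statement, since $s \notin \fm B$ does not force $\bar{s}$ to be a unit. Once $\delta \in \fq$ is in hand, the endgame you correctly identified ($s^k \in \fq + yB$; in fact $s^k \in \fq$) goes through as in the paper: from $y^{ek}s^k \in \fq \subseteq A$, downward induction using $yB \cap A = yA$ (Lemma \ref{step4}$(\ref{step4:npsn})$) gives $s^k \in A$; moreover $y \notin \fq$ (else $yx \in (A \cap yB) - yA$, contradicting Lemma \ref{step4}$(\ref{step4:npsn})$), so primality of $\fq$ (Lemma \ref{step1}) yields $s^k \in \fq \subseteq \fm B$; finally, radicality of $\fm B$ (Lemma \ref{step5}) gives $s \in \fm B$.
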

\begin{proof}
Multiplying throughout by $x$, we have
\begin{align*}
    x^2u=\delta x + xy^es.
\end{align*}
Since $x^2\cdot x=x^3 \in A$ we have $x^2\in \mathfrak{q} = (A:_A x)$,
and since $u\in B$, we see that $x^2u\in \mathfrak{q}B=\mathfrak{q}\subseteq A$ by Lemma \ref{step3}$(\ref{step3bmodfin})$. Also, $xs\in A$ by assumption, so we must also have $\delta x\in A$. Hence $\delta\in \mathfrak{q}$ by definition of $\mathfrak{q}$ as $(A:_A x)$ (see Lemma \ref{step1}). Now since $y^es=xu-\delta$, for every integer $k>1$ we have
\begin{align*}
  (y^es)^k = (xu-\delta)^k = \sum_{i=0}^k\binom{k}{i}(xu)^i\delta^{k-i},
\end{align*}
which we claim lies in $\mathfrak{q}$.
For $i<k$ we have $\delta^{k-i}\in \mathfrak{q}$ and so $(xu)^i\delta^{k-i}\in B\mathfrak{q}=\mathfrak{q}B=\mathfrak{q}$ by Lemma \ref{step3}$(\ref{step3bmodfin})$.
For $i = k$, 
we have $x \in A'$ since $x^3 \in A$, and hence
$x \in B\coloneqq\Set{t \in A' \given \mathfrak{q}t \subseteq A}$. By assumption, $u \in B$, so $x^{k-2}u^k \in B$. Thus,
we have
\begin{align*}
    (xu)^k = (x^2) \cdot (x^{k-2}u^k)\in \mathfrak{q}B=\mathfrak{q}
\end{align*}
by Lemma \ref{step3}$(\ref{step3bmodfin})$.
\par It remains to show that $s \in \fm B$.
If $y^n s^k \in A$ for some $n$, then $y^n s^k \in yB \cap A = yA$ (by Lemma \ref{step4}$(\ref{step4:npsn})$), so $y^{n-1} s^k \in A$. Since $y^{ek}s^k \in A$ by the previous paragraph, downward induction implies that $s^k \in A$. If $y \in \mathfrak{q}$, then $yx \in (A \cap yB) - yA$, which contradicts Lemma \ref{step4}$(\ref{step4:npsn})$. Since $y \notin \mathfrak{q}$, we therefore have $s^k \in \mathfrak{q} \subseteq \mathfrak{m}B$. Since $\mathfrak{m}B$ is radical by Lemma \ref{step5}, this implies $s \in \mathfrak{m}B$, as desired.
\end{proof}

Theorem \ref{thm:pseminormallifts}$(\ref{thm:pseminormalliftsfixedp})$ now
follows as in \cite[pp.\ 442--444]{Hei08}, replacing the preliminary lemmas in
\cite{Hei08} with our preliminary results above in
\S\ref{section:preliminary-steps} and in this section.
For completeness, we write down the proof below.

\begin{proof}[Proof of Theorem
\ref{thm:pseminormallifts}$(\ref{thm:pseminormalliftsfixedp})$]
Let $B = \tilde{B}$ as in Lemmas \ref{step5} and \ref{step6}.
We start by setting some notation.
Let 
\[B_i \coloneqq A + x\mathfrak{m}B + y^iB \subseteq B.\]
This gives a descending chain of $A$-modules 
\[B \supseteq B_1 \supseteq B_2 \supseteq \cdots \supseteq A + x\mathfrak{m}B.\]
By Krull's intersection theorem \cite[Exercise 18.35]{AK21}, we then have
\[A + x\mathfrak{m}B \subseteq \bigcap_i B_i \subseteq \bigcap_i (A + x\mathfrak{m}B + \mathfrak{m}^iB) = A + x\mathfrak{m}B,\]
and hence $\bigcap_i B_i = A + x\mathfrak{m}B$.

Now set $U_i = \Set{t \in B \given xt \in B_i}$. From the above, we have
\[U\coloneqq \bigcap_i U_i = \Set[\big]{t\in B\given xt \in A + x\mathfrak{m}B}.\]
Using the descending chain of $B_i$'s, we have another descending chain of
$A$-modules:
\[B \supseteq U_1 \supseteq U_2 \supseteq \cdots \supseteq U.\]
Moreover, for every $t \in B$, we have
$x^2 t^2, x^3 t^3, p(xt) \in \fq B = \fq \subseteq A$
by Lemma \ref{step3}$(\ref{step3bmodfin})$ since $x^2, x^3, p
\in \mathfrak{q}$.
Thus, $xt \in A + yB \subseteq B_1$ by Lemma \ref{step5}, and hence $U_1 = B$. Now we claim that $U = U_m$ for some $m$. First, we note that $\mathfrak{m}B \subseteq U$. It follows that $B / U$ is a finite-dimensional $A / \mathfrak{m}$-vector space, and hence the chain of submodules $B / U \supseteq U_1 / U \supseteq U_2 / U \supseteq \cdots \supseteq U / U = 0$ stabilizes. Therefore, $U = U_m$ for some $m$.\smallskip

\par We now proceed in a sequence of steps.

\begin{step}
  Choosing a sequence of generators $u_1,u_2,\ldots,u_n$ for $B$ lifting a basis
  for $\bar{B} \coloneqq B/\mathfrak{m}B$.
\end{step}

Consider the homomorphism $B \to \bar{B} \coloneqq B / \mathfrak{m}B$, and denote by $\bar{U}$ and $\bar{U}_i$ the images of $U$ and $U_i$ under this homomorphism. We then have an ascending chain
\[ \bar{U} = \bar{U}_m \subseteq \bar{U}_{m - 1} \subseteq \cdots \subseteq \bar{U}_1 = \bar{B}. \]

Now choose a basis for the $A/\mathfrak{m}$-vector space $\Bar{B}$ that contains
a basis for $\Bar{U}_i$ for each $i$. By \cite[Proposition 2.8]{AM69}, we can
then lift this basis to a generating set for $B$ in the following manner:
Let $\Bar{b}$ be an element of this basis, which lifts to some $b' \in B$. Given that $\mathfrak{m}B \subseteq U$, whether or not $b' \in U$ is independent of the lifting. So if $b' \in U$, then for some $r \in A$ and $t \in \mathfrak{m}B$, we have 
\[xb' = r + xt \in A + x\mathfrak{m}B.\]
Setting $b = b' - t$, we then get 
\[xb = x(b' - t) = r + xt - xt = r \in A.\]
If instead, we have $b' \notin U$, then let $j$ be the largest integer such that $b' \in U_j$. Again, since $b'$ is a lift of our chosen basis of $\Bar{B}$, the number $j$ is independent of the lifting. So as before, we have
\[xb' = r + xt + y^ja \in A + x\mathfrak{m}B + y^j B.\]
for some $r \in A, t \in \mathfrak{m}B$, and $a \in B$. Setting $b = b'-t$, we then have
\[xb = r + y^j a \in A + y^j B.\]

Let $\dim {\Bar{B}} = n$. By the procedure above, we have a generating set of $B$ of size $n$. We enumerate the elements in this generating set $u_1, u_2, \ldots, u_n$ for $B$, so that if $\dim_{A/\mathfrak{m}}(\Bar{U}_j) = n-k_j > 0$, then $\Bar{u}_{k_j+1}, \Bar{u}_{k_j+2}, \ldots, \Bar{u}_n$ is a basis for $\Bar{U}_j$. In particular, if $\dim_{A/\mathfrak{m}}(\Bar{U}) = n - k \geq 0$, then $\Bar{u}_{k_j+1}, \Bar{u}_{k_j+2}, \ldots, \Bar{u}_n$ is a basis for $\Bar{U}$.\smallskip

In Step \ref{step9} below, we use the following notation.
For each $i\le k$, we have a generator $u_i$ for which $u_i\in U_{e_i}-U_{e_i+1}$ for some $e_i\in\{1,2,\dots,m\}$. For this element $u_i$, we may write
\begin{align}
    xu_i = \alpha_i + y^{e_i}s_i \label{xuiRelation}
\end{align}
for some $\alpha_i\in A$ and $s_i\in B$. Suppose that $s_i\in A+yB$, say $s_i=r+yb$ for some $r\in A,b\in B$. Then 
\begin{align*}
    xu_i &= \alpha_i+y^{e_i}(r+yb)
    = \alpha_i + y^{e_i}r + y^{e_i+1}b \in U_{e_i+1}
\end{align*}
This is a contradiction, so $s_i\not\in A+yB$. Hence $s_i\not\in \mathfrak{m}B$ by Lemma \ref{step5}. Repeating this process for every $i\leq k$, we obtain a sequence of elements $s_1,s_2,\dots,s_k \in B - \fm B$. Note that we may freely add elements of $A$ to $s_i$ without changing the property that \eqref{xuiRelation} holds for some $\alpha_i\in A$.

\begin{step}\label{step9}
The subset $\{\bar{1},\bar{s}_1,\bar{s}_2,\dots,\bar{s}_k\}\subseteq \bar{B}$ is linearly independent, and hence if $C\coloneqq A+\sum_{i=1}^ks_iA$, its image $\bar{C}$ in $\bar{B}$ spans a subspace of dimension $k+1$.
\end{step}
If this is not the case, pick $j$ minimally such that the set $\{\bar 1, \bar s_1, \ldots, \bar s_j\}$ is linearly dependent, and pick $\rho_i \in A$ such that
\[s_j - \sum_{i < j} \rho_i s_i \in A +\mathfrak{m}B.\]
Let $f_i = e_j - e_i \geq 0$ for $i \leq j$, and let
\[
    u = u_j - \sum_{i < j} y^{f_i} \rho_i u_i.
\]
We then have
\begin{align*}
    xu &= xu_j-\sum_{i<j}y^{f_i}\rho_ixu_i\\
    &= \alpha_j + y^{e_j}s_j - \sum_{i<j}y^{f_i}\rho_i(\alpha_i+y^{e_i}s_i)\\
    &= \left(\alpha_j-\sum_{i<j}y^{f_i}\rho_i\alpha_i\right)+y^{e_j}\left(s_j-\sum_{i<j}\rho_is_i\right)
\end{align*}
and hence
\begin{align*}
    xu \in A+y^{e_j}(A+\mathfrak{m}B) = A+y^{e_j}(A+yB) = A+y^{e_j+1}B
\end{align*}
since $\mathfrak{m}B=\mathfrak{m}+yB$ by Lemma \ref{step5}. However, this implies that $u\in U_{e_j+1}$, in which case $\bar{u}_1,\bar{u}_2,\dots,\bar{u}_j$ are not linearly independent modulo $\bar{U}_{e_j+1}$, contradicting the choice of generating set.\smallskip

In the rest of the proof, we use the following notation.
Let $T = \Set{t \in B \given xt \in A}$, and let $\bar T$ be the image of $T$ in $\bar B$. By definition, $T \subseteq U$, and we have $u_{k + 1}, u_{k+2},\ldots, u_n \in T$, so $\bar T = \bar U$, and $\dim_{A/\fm}(\bar T) = n - k$. Since $\bar C + \bar T \subseteq \bar B$, we have
\[\dim_{A/\fm}(\bar C + \bar T) = \dim_{A/\fm}(\bar B) - d = n - d\]
for some $d \geq 0$. Therefore, we have:
\begin{align*}
    \dim_{A/\fm}(\bar C \cap \bar T) &= \dim_{A/\fm}(\bar C) + \dim_{A/\fm}(\bar T) - \dim_{A/\fm}(\bar C + \bar T)\\
    &= (k + 1) + (n - k) - (n - d)
    = d + 1 \geq 0,
\end{align*}
so $\bar C \cap \bar T \neq 0$. Our goal is to find an appropriate element $s \in C \cap T$ that lifts an element in this intersection, which we will use to produce a contradiction.

\begin{step}\label{step10}
We have $\bar{1}\not\in\bar{T}$.
\end{step}

We first show that $T = (\fq :_B x)$. The inclusion $T \supseteq (\fq:_B x)$ follows from the fact that $T = (A :_B x)$. Conversely, if $t \in T$, then $xt \in A$, so
\[ (xt)^2 = x^2 t^2 \in \fq B = \fq \]
by Lemma \ref{step3}$(\ref{step3bmodfin})$ (here, we again use the fact that $x^2 \in \fq$, which holds since $x^2x = x^3 \in A$ implies $x^2 \in \fq = (A:_A x)$). Now, if $\bar{1} \in \bar{T}$, then $T + \fm B = B$, so $T = B$ by the NAK lemma \cite[Corollary 2.7]{AM69}. This implies that $1 \in T$, and hence $x \in A$, a contradiction. Therefore, we must have $\bar{1} \notin \bar{T}$.

\begin{step}\label{step11}
Choose elements $r_j \in C$ for $j \in \{1,2,\cdots, d+1\}$ that map to a basis of $\Bar{C} \cap \Bar{T}$. Let 
\[E\coloneqq \sum^{d+1}_{j=1} r_j A \subseteq C,\]
whose image $\Bar{E}$ in $\Bar{B}$ satisfies $\Bar{E} = \Bar{C} \cap \Bar{T}$ by our choice of $r_j$'s. Then, there exists an element
\[z \in \bigl((T + \fm + yC) \cap E\bigr) - \fm B.\]
\end{step}

We first show there exists a subset $F = \{z_1, z_2, \ldots, z_g\}$ of $E$ that satisfies the following properties:

\begin{enumerate}[label=$(\arabic*)$,ref=\arabic*]
    \item\label{step11prop1} The set $\{\Bar{z}_1, \Bar{z}_2, \ldots, \Bar{z}_g\}$ is a linearly independent subset of $B$.
    \item\label{step11prop2} For every $i \in \{1,2\ldots,g\}$, we have $z_i = b_i + y^{e_i}t_i$, where $b_i \in T + \fm + yC$, $e_i \in \mathbb{Z}_{> 0}$, $t_i \in B$, and $\{\Bar{\Bar{t}}_1, \Bar{\Bar{t}}_2, \ldots, \Bar{\Bar{t}}_g\}$ is a linearly independent subset of $\Bar{B}/(\Bar{C}+\Bar{T}).$
    \item\label{step11prop3} The number $g \geq 0$ is maximal for sets satisfying $(\ref{step11prop1})$ and $(\ref{step11prop2})$.
    \item\label{step11prop4} The number $\sum_i e_i$ is minimal among sets
      satisfying $(\ref{step11prop1})$, $(\ref{step11prop2})$, and
      $(\ref{step11prop3})$. 
    \item\label{step11prop5} $e_1 \leq e_2 \leq \ldots \leq e_g$.
\end{enumerate}
The collection $P$ of all subsets $F$ satisfying $(\ref{step11prop1})$ and $(\ref{step11prop2})$ can be partially ordered by inclusion, and $P\ne\emptyset$ since it contains the empty set. Since $g\le\dim_{A/\fm}(\bar{B})$, we can restrict to a subcollection of $P$ whose cardinality $g$ is maximal, ensuring $(\ref{step11prop3})$. Finally, we pick any set in this subcollection with minimial $\sum_ie_i$ to ensure $(\ref{step11prop4})$, and reorder the elements if necessary so that $(\ref{step11prop5})$ holds. We have
\begin{align*}
    \dim_{R/\fm}\left(\frac{\bar{B}}{\bar{C}+\bar{T}}\right)=n-(n-d)=d
\end{align*}
and so $g\le d<d+1=\dim_{A/\fm}(\bar{E})$. Thus, $\{\bar{z}_1,\bar{z}_2,\dots,\bar{z}_g\}$ has too few elements to span all of $\bar{E}$.

We claim that we can find a sequence $v_1,v_2,\ldots$ of elements in $E$ such
that for all $i$, the set $\{\Bar{z}_1,\Bar{z}_2,\ldots,\Bar{z}_g,\Bar{v}_i\}$ is linearly independent in $\Bar{E}$ and 
\[v_i \in T+\fm+yC+y^iB\]
when $v_{i+1} - v_i \in y^{i-e_g}B$ for all $i > e_g$. Here, we set $e_0 = 0$.
We proceed by induction on $i$. For $i =1$, choose $v_1 \in E$ such that
$\{\Bar{z}_1,\Bar{z}_2,\ldots,\Bar{z}_g,\Bar{v}_1\}$ is a linearly independent subset of $\Bar{E}$.
This yields $v_1 \in T+\fm B = T+\fm+yB$ as desired, with the equality holding
by the result of Lemma \ref{step5}. For the inductive step, say we have already picked $v_j \in E$, which we can write as
\[v_j = a + y^jt\]
for $a \in T+\fm+yC$ and $t\in B$. By the maximality of $g$ and the choice of
$F$, the set 
\[\bigl\{\Bar{\Bar{t}}_1,\Bar{\Bar{t}}_2,\ldots,\Bar{\Bar{t}}_g,\Bar{\Bar{t}}\bigr\} \subseteq \frac{\Bar{B}}{\Bar{C}+\Bar{T}}\]
cannot be linearly independent. Furthermore, by the minimality of $\sum_i e_i$, the set 
\[\bigl\{\Bar{\Bar{t}}_1,\Bar{\Bar{t}}_2,\ldots,\Bar{\Bar{t}}_h,\Bar{\Bar{t}}\bigr\} \subseteq \frac{\Bar{B}}{\Bar{C}+\Bar{T}}\]
is linearly dependent when $e_{h+1}>j$. Therefore, we may write 
\[\Bar{\Bar{t}} = \sum_{i\leq h}\Bar{\Bar{\rho}}_i\Bar{\Bar{t}}_i\]
for some $\rho_i\in R$. This gives 
\[t - \sum_{i\leq h}\rho_it_i \in C+T+\fm B = C+T+yB,\]
where the rightmost equality holds by the definition of $C$ in Step \ref{step9}
and since $\fm B = \fm + yB \subseteq C + yB$ by Lemma \ref{step5}. By writing
\[
    t-\sum_{i\le h} \rho_it_i=c+yb
\]
for $c\in C+T$ and $b\in B$, we have
\[
    v_j = a+y^jt = (a+y^jc)+y^j(t-c),
\]
and hence we can replace $t$ by $t-c$ and $a$ by $a+y^jc$ to assume that
$t=\sum_{i\le h}\rho_it_i\in yB$. Now let $f_i\coloneqq j-e_i$ and set
\[
    v_{j+1} = v_j - \sum_{i\le h}y^{f_i}\rho_iz_i
    = \biggl(a-\sum_{i\le h}y^{f_i}\rho_ib_i\biggr)+y^j\biggl(t-\sum_{i\le
    h}\rho_it_i\biggr),
\]
in which case
\[
  v_{j+1}\in T+\fm+yC+y^j(yB) = T+\fm+yC+y^{j+1}B.
\]
Since $f_i \geq j-e_g$ for all $i$, we have
\[
  v_{j+1} - v_j = \sum_{i\leq h} y^{f_i}\rho_i z_i \in y^{j-e_g}B.
\]
We have now constructed the desired sequence $v_1,v_2,\ldots$ of elements in
$E$.

Finally, since $A$ is $y$-adically complete by Lemma \ref{step2}, the finitely generated $A$-module $B$ is also $y$-adically complete \cite[Theorem 10.13]{AM69}. Thus, the Cauchy sequence $v_1, v_2, \ldots$ has a limit in $B$, which we denote by $z$. We then have that
\[z \in (T+\fm+yC+y^iB)\cap(E+y^iB)\]
for every $i$. By Krull's intersection theorem \cite[Exercise 18.35]{AK21}, we therefore have
\[z \in (T+\fm+yC)\cap E.\]
Finally, $\Bar{z} = \Bar{v}_{e_g+1} \ne 0$ implies $z \not\in \fm B$. 

\begin{step}\label{step12}
Conclusion of the proof
\end{step}

Write $z = s + c$ for $s \in T$ and $c \in \fm + yC$. We have $z \in E - \fm B \subseteq C - \fm B$. Since $c \in C \cap \fm B$, we have $s \in (C \cap T) - \fm B$. Since $\bar 1 \notin \bar T$ by Step \ref{step10}, we have $\bar s \notin A/\fm \subseteq \bar B$. Therefore, we have
\[s = \gamma + \sum_{i \leq k} \gamma_i s_i\]
for some $\gamma, \gamma_i \in A$, where the $\gamma_i$ are not all in $\fm$ (otherwise we would have $\bar{s} = 0 \in A/\fm$).
Then, $\gamma_j$ is a unit for some $j$, so we may replace $s_j$ by $s_j + \gamma/\gamma_j$ to get
\[s = \sum_{i \leq k} \gamma_i s_i.\]
(Recall that adding an element of $A$ to the $s_i$'s does not change the fact that
\eqref{xuiRelation} holds for some $\alpha_i \in A$.) The linear independence of
the $\bar s_i$'s is also preserved, since $\bar 1 \notin \sum_{i \leq k} (A/\fm)
s_i$ by Step \ref{step9}.

Now recall that \eqref{xuiRelation} holds for all $i \leq k$. Let $f_i \coloneqq e_k - e_i \geq 0$, and set
\[ u = \sum_{i \leq k} y^{f_i} \gamma_i u_i \in B. \]
Then, we have
\[
    xu = \sum_{i \leq k} y^{f_i} \gamma_i (\alpha_i + y^{e_i} s_i) 
    = \sum_{i \leq k} y^{f_i} \gamma_i \alpha_i + \sum_{i \leq k} y^{e_k - e_i} y^{e_i} \gamma_i s_i 
    = \sum_{i \leq k} y^{f_i} \gamma_i \alpha_i + y^{e_k} s 
    = \delta + y^{e_k} s,
\]
where $\delta \coloneqq \sum_{i \leq k} y^{f_i} \gamma_i \alpha_i \in A$. We
also know that $xs \in A$ since $s \in T$. On the other hand, by Lemma
\ref{step6},
this implies that $s \in \fm B$, a contradiction.
\end{proof}

\section{Normality and weak normality deform}\label{section:nwnlift}
We now give our new proofs of the facts that normality and weak normality deform.
\begin{customthm}{C}\label{thm:nwnlift}
  Let $(A,\fm)$ be a Noetherian local ring, and let $y \in \fm$ be a
  nonzerodivisor.
  If $A/yA$ is normal (resp.\ weakly normal), then $A$ is normal (resp.\ weakly
  normal).
\end{customthm}

\begin{proof}
Suppose that $A$ is not normal (resp.\ weakly normal).
As in Setup \ref{settingforthmsaandc} (which uses \cite[Main Theorem]{Hei08} or
Theorem \ref{thm:pseminormallifts} in the weakly normal case), there exists an
element $x \in Q(A) - A$ that is integral over $A$ (resp.\ such that $x^p,px \in
A$ for some prime number $p > 0$).
After applying the reduction in Lemma \ref{step1}, we may assume that
the ideal $\fq \coloneqq (A :_A x)$ is prime.
Letting $B = \tilde{B} \coloneqq \Set{t \in A' \given \fq t \subseteq A}$
(resp.\ $B = \Set{t \in \tilde{B} \given t^p \in A}$),
we have the commutative diagram
\[
\begin{tikzcd}
A/yA \arrow[r,hook]        & B/yB \arrow[r,hook] & Q(A/yA) \\
A \arrow[u] \arrow[r,hook] & B \arrow[u]            &        
\end{tikzcd}
\]
where the horizontal arrows are injective by Lemma
\ref{step4}$(\ref{step4:npsn})$ (resp.\ Lemma
\ref{step4}$(\ref{step4:wn})$).
Note that $x \in B$ by the definition of $B$.
\par We claim that $A \to B$ is an isomorphism.
Since this map is injective by the above, it suffices to show that every $b \in
B$ lies in $A$.
\par We first consider the normal case.
Let $f \in A[T]$ be a monic polynomial such that $f(b) = 0$, and let $\bar{f}$
be its reduction modulo $y$. Then, $\bar{f} (\bar{b}) = 0$, so $\bar{b}$ is integral over $A / yA$. Since $\bar{b}$ lies in $Q(A/yA)$, it follows that $\bar{b} \in A / yA$, and hence $A / yA = B / yB$. Therefore, we have $B = A + yB$, so $A = B$ by the NAK lemma \cite[Corollary 2.7]{AM69}.
This is a contradiction, since $x \in B - A$ by construction.
\par Finally, we consider the weakly normal case.
By the definition of $B$, we have $b^p \in A$ and $p b \in \mathfrak{q} B \subseteq A$, so $\bar{b}^p, p\bar{b} \in A/yA$. Since $A / yA$ is weakly normal, it follows that $\bar{b} \in A / yA$, and hence $A / yA = B / yB$. Therefore, we have $B = A + yB$, so $A = B$ by the NAK lemma.
This is a contradiction, since $x \in B - A$ by construction.
\end{proof}

\end{document}